\newtheorem{theorem}{Theorem}[section] 
\newtheorem{lemma}[theorem]{Lemma}
\newcommand{\fl}[1]{\left\lfloor #1\right\rfloor}
\newcommand{\ceil}[1]{\left\lceil #1\right\rceil}
\newcommand{\field}[1]{\mathbb{#1}}
\newcommand{\abs}[1]{\left\vert #1 \right\vert}
\newcommand{\inv}[1]{\frac{1}{#1}}
\newcommand{\ssy}{\mu}
\newcommand{\tns}{T_n}
\newcommand{\tnss}{T_n^{\ast}}
\newcommand{\mone}{\nu_{n,p}}
\newcommand{\monek}{\nu_{n,{\mathfrak p}}}
\newcommand{\moneTwok}{\nu_{2,\fp}}
\newcommand{\moneThreek}{\nu_{3,\fp}}
\newcommand{\mpik}{P_{n,{\mathfrak p}}(\ssy)}
\newcommand{\mpikk}{P_{n,{\mathfrak p}}(\ssy; K_{\mathfrak p})}
\newcommand{\mpist}{P_{n,p}^{\ast}(\ssy)}
\newcommand{\mpistk}{P_{n,\fp}^{\ast}(\ssy)}
\newcommand{\pgpk}{P_{n,{\mathfrak p}}}
\newcommand{\pgpsk}{P_{n,\fp}^{\ast}}
\newcommand{\pgpTwok}{P_{2,\fp}}
\newcommand{\pgpThreek}{P_{3,\fp}}
\newcommand{\haark}{h_{\mathfrak p}}
\newcommand{\nssy}{\nu_{n}(\ssy)}
\newcommand{\tv}{\tilde{\nu}}
\newcommand{\fp}{{\mathfrak{p}}}
\newcommand{\sO}{{\mathfrak O}}
\newcommand \niq{{M(q; i)}}
\newcommand \nmq{{M(q; m)}}
\newcommand \nnp{{M(p; n)}}
\def\F{{\field F}}
\def\Z{\field{Z}}
\def\Q{\field{Q}}
\providecommand{\Gal}{\mathop{\rm Gal}\nolimits}%
\title[Probabilistic Galois Theory over $P$-adic Fields]
 {Probabilistic Galois Theory over $P$-adic Fields} 
 \author{Benjamin L. Weiss}
 \address{Dept. of Mathematics, Technion-Israel Institute of Technology, Haifa 32000, Israel.\newline\newline
 Current Address: Department of Mathematics \& Statistics \\
University of Maine\\
Orono, ME 04469\\}
   \email{benjamin.weiss@maine.edu}
\subjclass{11S05 (primary), 11S20, 11T06 (secondary)}
\thanks{The author  acknowledges
support from NSF Grant DMS-0801029  from
 a Mathematics Research Assistantship Fellowship at the University of Michigan and from ISF grant \# 776/09.
This is a version of a paper that appeared in the May 2013 issue of the Journal of Number Theory.}
\date{Sept. 1, 2014}
\begin{document}
\maketitle

\begin{abstract}
We estimate several probability distributions arising from the study of random, monic polynomials
of degree $n$ with coefficients in the integers of a general $p$-adic field $K_{\fp}$ having
residue field with $q= p^f$ elements.  We estimate the
distribution of the degrees of irreducible factors of the polynomials, with
 tight error bounds valid when $q> n^2+n$.
We also estimate
the distribution of Galois groups of such polynomials, showing that for fixed $n$,
almost all Galois groups are cyclic in the limit $q \to \infty$. In particular, we show that the 
Galois groups are cyclic with probability at least $1 - \inv{q}$.
We obtain exact formulas in the case of $K_\fp$ for all $p > n$ when $n=2$ and $n=3$.
\end{abstract}

\section{Introduction}\label{s:intro}

    The study of the Galois groups of  polynomials has a long history.
    In  1892 Hilbert \cite{Hilbert:1892} showed that for an irreducible  polynomial $f(X,t) \in (\Q[X])[t]$ there 
    are infinitely many rational numbers $x$ for which $f(x,t)$ is irreducible in $\Q[t]$. 
      In 1936
    van der Waerden \cite{Waerden:1936} gave a quantitative form of this assertion. Consider 
       the set of   degree $n$ monic  polynomials with integer coefficients restricted to a box
    $|a_i| \le B$. Van der Waerden showed that a polynomial drawn at random from this set has Galois
    group $S_n$ with probability going to $1$
    as $B \to \infty$. 
       (See Section \ref{sec13} for more details). 
       The study of the  distribution of Galois groups for polynomials with 
    coefficients drawn  from some given probability distribution is now termed probabilistic Galois theory.

     In this paper we address problems analogous to that of van der Waerden for $p$-adic fields. 
   We investigate the distribution of Galois groups of monic polynomials of fixed degree $n$ with
    coefficients drawn from
 the integers $\sO_{\fp}$ of a general $p$-adic field $K_{\fp}$, 
 i.e. a finite extension of $\Q_p$.
        Quantitative  questions parallel to van der Waerden's study
   concern, firstly,  the probability a randomly drawn $p$-adic polynomial is irreducible, secondly, the 
   probability such a polynomial has irreducible factors of given degrees, and thirdly,  the distribution of Galois groups
    of such polynomials.
    One can ask about the probability of occurrence of each kind of Galois group
    for  such polynomials. In this paper we will find  bounds concerning such probabilities,
    and exact formulas for degrees $n=2$ and $n=3$.

    Our main results are:\medskip

(1) Estimates for the probability distribution of splitting types of monic degree $n$ polynomials
with coefficients drawn from the  integers $\sO_{\fp}$ in a $p$-adic field $K_{\fp}$.  These include  
exact formulas if one conditions the polynomials to
have discriminant relatively prime to $p$. (Theorems \ref{t:uncond}
and  \ref{t:uncond2});\medskip

(2) Estimates for the probability  distribution of Galois groups of monic degree $n$ polynomials  with coefficients
drawn from the  integers $\sO_{\fp}$ in a $p$-adic field $K_{\fp}$ (Theorem \ref{c:uncond});\medskip

(3) The existence of a limiting distribution for the distributions
of the splitting types, resp. the  Galois groups, as $q \to \infty$, where $q= p^{f_{\fp}}$
is the order of the residue class field of $K_{\fp}$ (Theorems \ref{t:uncond2cor} and  \ref{c:uncondcor});\medskip

(4) Exact probability calculations for (1), (2) above for  the case of
 any $p$-adic field $K_\fp$ for degrees $n=2, 3$ and all primes $p>n$ (see Section \ref{s:examp}).\medskip

 Below we state the results in more detail.
 These results reveal both some interesting parallels and some sharp differences with the distributions for
  polynomials with integer coefficients; we give a comparison in Section \ref{sec13}. 

\subsection{Factorization of $p$-adic Polynomials}

We let $K_{\fp}$ denote a finite extension of the $p$-adic field $\Q_p$, and we let $\sO_{\fp}$ denote the integers
in the field $K_{\fp}.$ Two invariants of $K_{\fp}$ are its {\em ramification index} $e= e_{\fp}$ 
and its residue class field degree $f= f_{\fp}$. These are 
given by
$
(p)\sO_{\fp} = (\pi^e) \sO_{\fp},
$
where $\pi= \pi_{\fp}$ is a uniformizing parameter for the maximal ideal $\fp :=\pi \sO_{p}$, and 
the residue class field given by $\F_q= \sO_{\fp}/ \pi \sO_{\fp}$ with $q= p^{f_\fp}.$
 We normalize the additive $\fp$-adic Haar
measure $\haark$ on $K_{\fp}$ to assign mass $1$ to the $\fp$-adic integers $\sO_{\fp}.$

We now consider monic, degree $n$ polynomials 
$f(x) = x^n + a_{n-1}x^{n-1} + \dots + a_0$ with $(a_0, \ldots, a_{n-1}) \in (\sO_{\fp})^n.$
For fixed $p$-adic field $K_\fp$, let $\haark$ be the Haar measure of the $\fp$-adic integers normalized to be a probability measure. For fixed positive integer $n$ and $p$-adic field $K_\fp$, we take the probability distribution of the $a_i$ as independent and distributed as $\haark$ and denote the product measure $\monek$.

Given a positive integer $n$, any unique factorization domain $R$, and any monic degree $n$ polynomial $f(x) \in R[x]$, we can factor $f(x)$ uniquely as $f(x) = \prod_{i=1}^kg_i(x)^{e_i}$, where the $e_i$ are positive integers and the $g_i(x)$ are distinct, monic, irreducible, and non-constant. 
Following the framework of Bhargava \cite{Bhargava:2007}, 
we  define the \emph{splitting type} of such a polynomial to be the formal symbol
\[
\ssy(f) := \left( \deg(g_1)^{e_1},\ \deg(g_2)^{e_2},\ \ldots\ \deg(g_k)^{e_k} \right)
\]
 where $k$ is the number of distinct irreducible factors of $f(x)$.
 Here we order the degrees in decreasing order.
We then define $\tns$ to be the set
of all possible splitting types above. Thus $T_3 =\{ (111), (21), (3), (1^2 1), (1^3)\}$.
Next we 
 define $\tnss$ to be the set of splitting types such that all the exponents are equal to $1$.
 Thus $T_3^{\ast}= \{ (111), (21), (3)\}.$ The latter sets are  the ones relevant to this paper.

  The class $\tnss$  labels  the possible 
  splitting types of square-free polynomials.  Each element $\ssy=\left(\ssy_1,\ \ssy_2,\ \ldots\ \ssy_k\right) \in \tnss$
  with $\ssy_1 \ge \ssy_2 \ge... \ge \ssy_k$ describes  a partition of
 $n$,  has
 associated  a unique conjugacy class  $C_\ssy \subset S_n$, the symmetric group on $n$ elements. The conjugacy class is the set of all elements of $S_n$ whose cycle lengths are equal to the  numbers
 $\ssy_1, \ldots, \ssy_k$ ordered in nonincreasing order.

For a splitting type $\ssy$,  let
$c_i(\ssy)$ count  the number of terms
 $\ssy_j$ which are equal to $i$.

We let $\mpik= \mpikk$ denote the set of
tuples $(a_0, a_1, ..., a_{n-1}) \in \left(\sO_\fp\right)^n$ for which the polynomial $f(x) = x^n+a_{n-1}x^{n-1} + \cdots + a_0$
 has  splitting type $\ssy \in \tnss$, so in particular $f(x)$ is square-free. 
 This set is a full measure subset of $\left(\sO_\fp\right)^n$. We will also write $f(x) \in \mpik$
  if the tuple of coefficients of $f(x)$ are in $\mpik$.

 We next  let $\mpistk$ be the subset of $\mpik$ whose associated discriminants are not divisible by $\fp$, i.e.\ their associated polynomials have modulo $\fp$ reductions which are square-free.   Theorem ~\ref{t:uncond} below determines the measure of 
 $\mpistk$.
 To state this result, we  also use the number $\nmq$ with $q= p^{f}$ which
 counts the number of monic, irreducible polynomials of degree $m$ in $\F_q[x]$.
By a  formula of Gauss this number is
 \[
  \nmq:=\inv{m}\sum_{d|m}\mu(\frac{m}{d})q^d = \frac{1}{m} \sum_{d|m}\mu(d)q^{\frac{m}{d}}.
 \]
  Thus  $M(q; 1)= q, M(q; 2)= \frac{1}{2} (q^2 -q)$, etc.
 For any positive integer $n$ and $p$-adic extension $K_{\fp}$ we will denote
 \[
 \pgpsk:= \bigcup_{\ssy \in \tns} \mpistk,
 \]
  which corresponds to the set of monic degree $n$ polynomials $f(x) \in \sO_\fp[x]$ whose discriminant is not divisible by $\fp$. In the following result we
 adopt the convention that ${A\choose 0} = 1$ for any non-negative integer $A$, and ${A\choose B} = 0$ whenever $B > A$.

\begin{theorem}\label{t:uncond}
Fix any integer $n \ge 2$ and prime $p$, and let $K_{\fp}$ be a finite extension of $\Q_p$, with
residue class degree $q=p^f$.
 The probability that a random monic, degree $n$ polynomial $f(x) \in \sO_{\fp}[x]$ has coefficients belonging to $\pgpsk$ is
\[\monek(\pgpsk)= 1- \frac{1}{q}.\]
For any splitting type $\ssy \in \tnss$, we have
\[\monek(\mpistk) = \inv{q^{n}}\prod_{i=1}^n{\niq \choose c_i(\ssy)}. \]
 \end{theorem}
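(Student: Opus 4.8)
The plan is to reduce everything modulo $\fp$ and count factorization types over the residue field $\F_q$. First I would observe that reduction modulo $\fp$ sends a monic degree $n$ polynomial $f(x) \in \sO_\fp[x]$ to a monic degree $n$ polynomial $\bar f(x) \in \F_q[x]$, and that under the normalized Haar measure $\monek$ the coefficients $a_i$ reduce to independent, uniformly distributed elements of $\F_q$. Consequently $\bar f$ is a uniformly random monic degree $n$ polynomial over $\F_q$, each of the $q^n$ such polynomials arising as the reduction of a set of measure exactly $q^{-n}$. Thus for any property $P$ of the reduction,
\[ \monek(\{f : \bar f \text{ has property } P\}) = \frac{1}{q^n}\,\#\{\text{monic degree } n \text{ polynomials over } \F_q \text{ satisfying } P\}. \]

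Next I would identify the splitting type of $f$ over $K_\fp$ with the factorization type of $\bar f$ over $\F_q$, valid precisely when $\bar f$ is square-free; this is the key step and the main obstacle. The condition that $\fp$ does not divide the discriminant is exactly that $\bar f$ is square-free, so membership in $\pgpsk$ is equivalent to $\bar f$ being square-free, and membership in $\mpistk$ is equivalent to $\bar f$ being square-free with factorization type $\ssy$. To prove the identification I would write $\bar f = \prod_i \bar g_i$ with the $\bar g_i$ distinct monic irreducibles, pairwise coprime since $\bar f$ is square-free, and apply Hensel's lemma to lift this coprime factorization to $f = \prod_i g_i$ in $\sO_\fp[x]$, each $g_i$ monic of the same degree as $\bar g_i$ and reducing to it. Because $\sO_\fp$ is integrally closed, any monic factorization of $f$ over $K_\fp$ already has factors in $\sO_\fp[x]$ (Gauss's lemma), so a nontrivial factorization of some $g_i$ would reduce to a nontrivial factorization of the irreducible $\bar g_i$ into monic pieces, which is impossible. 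Hence each $g_i$ is irreducible over $K_\fp$, the $g_i$ are distinct because their reductions are, and the splitting type of $f$ is $(\deg g_1, \ldots, \deg g_k)$, matching the factorization type of $\bar f$.

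With the identification in hand the problem becomes purely combinatorial over $\F_q$. For the splitting-type formula I would count the monic square-free polynomials over $\F_q$ of factorization type $\ssy$: such a polynomial is a product of $c_i(\ssy)$ distinct monic irreducibles of degree $i$ for each $i$, chosen independently across degrees, and distinct choices yield distinct polynomials by unique factorization. Since there are $\niq$ monic irreducibles of degree $i$, the number of admissible choices at level $i$ is $\binom{\niq}{c_i(\ssy)}$, so the total count is $\prod_{i=1}^n \binom{\niq}{c_i(\ssy)}$; dividing by $q^n$ yields the stated value of $\monek(\mpistk)$.

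Finally, for $\monek(\pgpsk) = 1 - 1/q$ I would count monic square-free polynomials of degree $n$ over $\F_q$. One route is to sum the preceding formula over all $\ssy \in \tnss$; the cleaner route is the classical generating-function identity $\sum_n S_n x^n = (1 - qx^2)/(1 - qx)$ for the numbers $S_n$ of monic square-free polynomials of degree $n$, obtained as $Z(x)/Z(x^2)$ with $Z(x) = 1/(1-qx)$. Extracting coefficients gives $S_n = q^n - q^{n-1}$ for $n \ge 2$, and dividing by $q^n$ produces $1 - 1/q$, completing the proof.
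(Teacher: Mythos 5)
Your proof is correct, and its overall architecture is the same as the paper's: reduce modulo $\fp$ (the paper's Lemma \ref{l:haar} gives the uniform distribution of $\bar f$), invoke Hensel's lemma to identify the splitting type of $f$ with the factorization type of its square-free reduction, and count square-free polynomials of type $\ssy$ over $\F_q$ via the product $\prod_{i=1}^n\binom{\niq}{c_i(\ssy)}$, exactly as in Lemma \ref{l:factortype}. You diverge from the paper in two places, both to your credit. First, for the square-free probability $1-\inv{q}$ the paper (Lemma \ref{l:sqfree}, due to Zieve) constructs an explicit $q$-to-$1$ map from non-square-free monic degree $n$ polynomials onto monic degree $n-2$ polynomials, whereas you extract the coefficient $q^n - q^{n-1}$ from the zeta-function identity $\sum_n S_n x^n = Z(x)/Z(x^2) = (1-qx^2)/(1-qx)$; both are standard, the generating-function route is shorter and more systematic, while Zieve's map is elementary and self-contained. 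Second, where the paper simply cites Hensel's lemma and asserts that $\ssy(\bar f) = \ssy(f)$, you actually prove it: you lift the pairwise-coprime factorization of $\bar f$, then use Gauss's lemma (monic factors over $K_\fp$ of a monic polynomial in $\sO_\fp[x]$ lie in $\sO_\fp[x]$) to rule out any further factorization of the lifted $g_i$, since it would reduce to a nontrivial monic factorization of the irreducible $\bar g_i$. This fills in a step the paper leaves implicit, and it is exactly the right argument.
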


This result is based on  the fact that if the modulo $\fp$ reduction of a
monic  polynomial in $\sO_\fp[x]$ is square-free,
then the splitting type of the polynomial is the same as the splitting type of its modulo $\fp$ reduction.
The probability  $1-\frac{1}{q}$ of a monic polynomial over $\F_q[X]$ having  square-free factorization was
noted by Zieve (see Lemma ~\ref{l:sqfree}).
The formula for the number of square-free polynomials over $\F_q$
having  a given $\ssy$ was noted by Cohen \cite[p.256]{Cohen:1970}.
The formula is valid  for all $n$ and $q$, including those $q \le n$.

For any element $\ssy = \left(\ssy_1,\ \ssy_2,\ \ldots\ \ssy_k\right)  \in \tnss$, let $\nssy$ be the proportion of elements in $S_n$ which are in
the conjugacy class  $C_\ssy$.  This is the probability distribution
on $\tnss$ given by
\begin{equation}\label{nu-n}
\nssy := \frac{|C_{\ssy}|}{|S_n|} = \frac{ |C_{\ssy}|}{n!}.
\end{equation}
A  well known explicit formula for  $\nssy$ in terms of $\ssy_1,\ldots, \ssy_k$ is
given in Lemma \ref{d:cycprob}. 
The distribution $\nssy$ is  exactly the distribution associated with the Chebotarev density theorem
for Artin symbols in $S_n$-extensions of $\Q$, compare \cite[pp. 410-411]{Lagarias:1977}.

Next, we obtain bounds on these probabilities when $q$ is sufficiently large 
compared to $n$, as follows. 

\begin{theorem}\label{t:uncond2}
Fix any positive integer $n \ge 2$ and any splitting type $\ssy \in \tnss$.
Consider monic polynomials with coefficients in
the ring of integers $\sO_{\fp}$ of a finite extension field $K_{\fp}$ over $\Q_p$,
with residue class field $\F_q$ with $q= p^f$.

(1)  If $n \ge 3$ and  $q \ge n^2$ then we have
 \[\abs{\monek(\mpistk) - \nssy} < \frac{n^2}{q-n^2}\nssy.\]

(2)   If $n = 2$ and  $q \ge n^2 + n$ we have
 \[\abs{\monek(\mpistk) - \nssy} < \frac{n^2 +n}{q -( n^2 + n)}\nssy.\]
\end{theorem}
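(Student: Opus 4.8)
The plan is to compare the exact formula from Theorem~\ref{t:uncond} with the closed form for $\nssy$ and to show their ratio differs from $1$ by $O(n^2/q)$. Write $c_i = c_i(\ssy)$ and recall the classical formula $\nssy = \prod_{i=1}^n \inv{i^{c_i}\,c_i!}$ recorded in Lemma~\ref{d:cycprob}. Dividing $\monek(\mpistk) = \inv{q^n}\prod_i \binom{\niq}{c_i}$ by this, expanding $\binom{\niq}{c_i} = \inv{c_i!}\prod_{j=0}^{c_i-1}(\niq - j)$, and using $\sum_i i c_i = n$ to write $q^n = \prod_i \prod_{j=0}^{c_i-1} q^i$, the factorials cancel and each factor $i(\niq-j)$ pairs with one copy of $q^i$. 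This yields the clean product identity
\[
\frac{\monek(\mpistk)}{\nssy} \;=\; \prod_{i=1}^n \prod_{j=0}^{c_i-1}\left(1 - \frac{ij + r_i}{q^i}\right), \qquad r_i := q^i - i\,\niq,
\]
so the theorem reduces to bounding the deviation of this finite product from $1$.

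Next I would bound the single-factor deviations $s_{ij} := (ij + r_i)/q^i$. Gauss's formula gives $r_1 = 0$ exactly, while for $i \ge 2$ the Möbius sum defining $r_i$ is dominated by its largest proper-divisor term, so $\abs{r_i} \le \sum_{d=1}^{\fl{i/2}} q^d < q^{\fl{i/2}+1}/(q-1)$; in particular $\abs{r_2}=q$ and $\abs{r_i}/q^i = O(q^{-2})$ for $i \ge 3$. Combined with $ij \le i(c_i-1) \le n - i < n$, this should give the uniform per-factor bound $\abs{s_{ij}} \le (n-1)/q$ for every admissible pair $(i,j)$, the extremal case being the totally split type, where $i=1$ and $j$ reaches $c_1-1 = n-1$.

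Finally I would convert this into a bound on the product. The number of factors is $\sum_i c_i \le n$, and $\abs{1-s_{ij}} \le 1+\abs{s_{ij}}$, so $\abs{\prod(1-s_{ij}) - 1} \le \prod(1+\abs{s_{ij}}) - 1 \le (1+\tfrac{n-1}{q})^n - 1$. Using the elementary estimate $(1+a)^N - 1 \le \tfrac{Na}{1-Na}$ (valid when $Na<1$, since $(1+a)^N \le e^{Na} \le (1-Na)^{-1}$) with $a=(n-1)/q$ and $N=n$ gives $\bigl\lvert \monek(\mpistk)/\nssy - 1\bigr\rvert \le \tfrac{n(n-1)}{q-n(n-1)}$. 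A short cross-multiplication shows that for $q \ge n^2$ this is bounded by the target $\tfrac{n^2}{q-n^2}$, proving part~(1); the identical estimate lies within the cruder bound of part~(2) as well, in fact already for $q>2$.

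The main obstacle is the uniform per-factor estimate: the degree-$2$ count $M(q;2)=\tfrac12(q^2-q)$ deviates from its leading term $q^2/2$ by a relative amount $\sim 1/q$, the single largest source of error. When $n\ge 3$ this is dominated by the $i=1$ collision terms, but when $n=2$ the index $i=2$ is itself extremal, which is precisely why a separate, looser constant is natural for $n=2$. The only other point needing care is controlling the signs of $r_i$ for $i\ge 3$ and confirming those contributions are genuinely $O(q^{-2})$, so that they are absorbed harmlessly into the $(n-1)/q$ bound; everything else is bookkeeping against the explicit form of $\nssy$.
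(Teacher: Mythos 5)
Your proposal is correct, and its skeleton matches the paper's: the paper likewise divides the exact count from Theorem~\ref{t:uncond} by $q^n$, compares factor-by-factor against the formula $\nssy = \prod_i i^{-c_i}/c_i!$ of Lemma~\ref{d:cycprob} by writing $(\niq-\ell)/q^i = \inv{i}(1+\tilde{\varepsilon}_\ell(i,q))$, and bounds the deviation of the resulting product from $1$ via its Lemma~\ref{l:boundprod} (all carried out in Lemma~\ref{l:factortype}). Where you genuinely differ is the per-factor accounting, and the difference buys something. The paper bounds the relative errors by $\abs{\varepsilon(i,q)} \le \inv{q-1}$ uniformly for $i \ge 2$, which yields the per-factor bound $\frac{n}{q}$ when $n \ge 3$ but degrades to $\frac{n+1}{q}$ when $n=2$ (since $\inv{q-1} + \frac{n}{q^2}$ exceeds $\frac{2}{q}$); that degradation is precisely what forces the paper's separate, weaker constant in part (2). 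You instead track the exact remainders $r_1 = 0$, $r_2 = q$, and $\abs{r_i} = O(q^{\fl{i/2}+1})$ for $i \ge 3$, combined with $ij \le n-i$, to get the uniform per-factor bound $\frac{n-1}{q}$; your product estimate $(1+a)^N - 1 \le \frac{Na}{1-Na}$ is then equivalent to invoking the paper's Lemma~\ref{l:boundprod} with $r = \frac{q}{n-1}$, so the sharpening lives entirely in the per-factor step. The payoff is the stronger uniform bound $\frac{n(n-1)}{q - n(n-1)}\,\nssy$, valid for all $n \ge 2$ once $q > n(n-1)$, which implies both stated inequalities at once and exposes the paper's $n=2$ case split as an artifact of its cruder $i=2$ estimate rather than a real phenomenon. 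Your flagged ``obstacle'' does check out: for $i \ge 3$ one has $s_{ij} \le \frac{n-i}{q^i} + \inv{q(q-1)} \le \frac{n-1}{q}$ with room to spare, and since you pass to absolute values, the signs of the $r_i$ (in fact positive for all $i \ge 2$) are irrelevant. One pedantic caveat shared with the paper itself: at $q = n^2$ (resp.\ $q = n^2+n$) the stated right-hand sides are undefined, so, exactly as in Lemma~\ref{l:factortype} where the hypothesis is $q > n^2$, the argument really proves the theorem under the strict inequalities.
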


In Theorem \ref{t:uncond}  the probabilities $\monek(\mpistk)$  depend only of the parameter $q$ 
and the splitting type $\ssy$, as do  the  bounds obtained in Theorem \ref{t:uncond2} on these distributions.

As an example, take $K_{\fp} = \Q_p$, and $\ssy= (n)$, which is the splitting type of a polynomial $f(x)$ which
is irreducible over $\Q_p$.
Theorem \ref{t:uncond} asserts in this case that
$\mone(\mpist) = \frac{\nnp}{p^n}$. In this case $\nssy= \frac{1}{n}$
and Theorem \ref{t:uncond2} says that on the range $ n^2+n < p < \infty$ one has
$|\mone(\mpist)- \frac{1}{n}| = O(\frac{1}{p})$,
with the implied constant in the $O$-symbol depending on $n$. In fact,
the formula for $\nnp$
yields the stronger convergence rate
$|\mone(\mpist) -  \frac{1}{n}| = O( p^{-\frac{n}{2}})$.

Another example with $K_{\fp} = \Q_p$  is $\ssy=(1, 1,.., 1)$ ($n$ times), which is the event that
the polynomial splits completely into distinct linear factors over $\Q_p$. In this
case $\nssy = \frac{1}{n!}$. Now $\mone(\mpist) = \frac{1}{p^n}{p\choose n}$
and one can see (by noting that $\frac{p(p-1)\cdots(p-n+1)}{p^n} = 1 + O(\inv{p})$) that $\frac{a}{p} > |\mone((1,1,...,1))- \frac{1}{n!}| > \frac{b}{p}$ for some constants $a$ and $b$. Thus
 in this case the $O(\frac{1}{p})$  estimate of  Theorem \ref{t:uncond2} is the
correct order of magnitude of the error as $p \to \infty$.

Theorem \ref{t:uncond2cor} immediately gives a convergence result for the distribution as $q= p^{f} \to \infty$.

\begin{theorem}\label{t:uncond2cor}
Fix any positive integer $n \ge 2$ and any $\ssy \in \tnss$.
Consider monic polynomials with coefficients in
the ring of integers $\sO_{\fp}$ of a finite extension field $K_{\fp}$ over $\Q_p$,
with residue class field $\F_q$ with $q= p^f$.
Then letting the $p$-adic field  $K_{\fp}$ vary, with either or both of $p$ and $f_{\fp}$
changing in  
any  way  such that $q \to \infty$, one has
 \[   \monek(\mpistk)  \rightarrow \nssy\mbox{ as } q \rightarrow \infty.\]
 In consequence
 \[   \monek(\mpik)  \rightarrow \nssy\mbox{ as } q \rightarrow \infty.\]
\end{theorem}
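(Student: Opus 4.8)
The plan is to deduce both convergences directly from Theorems \ref{t:uncond} and \ref{t:uncond2}, keeping $n$ and $\ssy$ fixed so that $\nssy$ is a constant independent of $q$.

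First I would settle the statement for the sets $\mpistk$. By Theorem \ref{t:uncond2}, for all $q > n^2+n$ one has the uniform bound
\[
\abs{\monek(\mpistk) - \nssy} < \frac{n^2 + n}{q - (n^2+n)}\,\nssy,
\]
where I use the weaker of the two estimates to cover the cases $n=2$ and $n\ge 3$ at once (the bound $\frac{n^2}{q-n^2}\nssy$ for $n\ge 3$ is dominated by this one once $q > n^2+n$). The crucial observation is that this bound depends on the $p$-adic field $K_\fp$ only through $q = p^{f_\fp}$, and not on $p$ and $f_\fp$ separately. Hence, however $q \to \infty$ (whether through $p\to\infty$, through $f_\fp\to\infty$, or any combination), the right-hand side tends to $0$ for fixed $n$, giving $\monek(\mpistk) \to \nssy$. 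The hypothesis $q > n^2+n$ is eventually satisfied and so places no constraint on the limit.

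For the ``in consequence'' assertion I would compare $\mpik$ with its subset $\mpistk$ by a telescoping argument. Since $\mpistk \subseteq \mpik$, the difference $\monek(\mpik) - \monek(\mpistk)$ is nonnegative. Summing over splitting types, the sets $\{\mpik : \ssy \in \tnss\}$ are pairwise disjoint and their union is the set of all monic degree $n$ polynomials square-free over $K_\fp$, which has full measure; consequently $\sum_{\ssy \in \tnss} \monek(\mpik) = 1$. On the other hand, $\pgpsk$ is the disjoint union of the $\mpistk$, so by Theorem \ref{t:uncond} we have $\sum_{\ssy \in \tnss} \monek(\mpistk) = \monek(\pgpsk) = 1 - \frac{1}{q}$. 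Subtracting gives
\[
\sum_{\ssy \in \tnss} \bigl( \monek(\mpik) - \monek(\mpistk) \bigr) = \frac{1}{q},
\]
and since each summand is nonnegative, every individual term obeys $0 \le \monek(\mpik) - \monek(\mpistk) \le \frac{1}{q}$. Combining this with the first part through
\[
\abs{\monek(\mpik) - \nssy} \le \bigl( \monek(\mpik) - \monek(\mpistk) \bigr) + \abs{\monek(\mpistk) - \nssy},
\]
both terms on the right tend to $0$ as $q\to\infty$, so $\monek(\mpik) \to \nssy$.

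The argument is essentially formal once Theorems \ref{t:uncond} and \ref{t:uncond2} are available, so the one genuine input I would state carefully is the identity $\sum_{\ssy \in \tnss} \monek(\mpik) = 1$, i.e.\ that the non-square-free monic polynomials form a $\monek$-null set. This holds because such polynomials lie in the vanishing locus of the discriminant, a nonzero polynomial in the coefficients, and that locus has Haar measure zero in $(\sO_\fp)^n$. This null-set fact is the main obstacle in the sense that it underlies the telescoping identity controlling the gap between $\mpik$ and $\mpistk$; everything else reduces to noting that the error estimates depend only on $q$ and vanish in the limit.
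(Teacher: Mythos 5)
Your proof is correct and follows essentially the same route as the paper: the paper's one-line proof simply observes that the error bounds of Theorem \ref{t:uncond2} (equation (\ref{e:thm1.2})) depend only on $q$ and decay on the order of $\inv{q}$, which is exactly your first paragraph. Your telescoping treatment of the ``in consequence'' part is sound and supplies a step the paper leaves implicit; note it can be shortened, since $\mpik \smallsetminus \mpistk$ is contained in the set of polynomials with discriminant divisible by $\fp$, which has measure $\inv{q}$ by Theorem \ref{t:uncond}, giving $0 \le \monek(\mpik) - \monek(\mpistk) \le \inv{q}$ directly for each $\ssy$ without summing over splitting types.
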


Two extreme cases of this result are the limit $q= p^f$ with $p$ fixed and $f \to \infty$,
and the limit $q=p$ with $p$ varying and $p \to \infty.$

\subsection{Galois groups of $p$-adic Polynomials}

We next consider
the distribution of Galois groups of random $p$-adic polynomials.
Every polynomial $f(x)$ in $\sO_{\fp}[x]$ has an associated splitting field $K_f$ over $K_{\fp}$ and Galois group 
$G_f =\Gal(K_f/K_{\fp}).$ It is well known that the group $G_f$ is solvable (see Fresenko \cite[p. 59]{Fesenko:2002}).
Additionally, we can realize $G_f$ as a subgroup of the symmetric group $S_n$ by choosing an ordering of the roots of $f$, and identifying an element of $G_f$ with its permutation action on the roots. The embedding of $G_f$ into $S_n$ is only unique up to conjugation, because of the explicit choice of order of the roots of $f$. For any subgroup $G \subset S_n$, let
$C_G$ denote the collection of subgroups of $S_n$ which are conjugate to $G$, and we denote $\abs{C_G}$ the number of subgroups of $S_n$ conjugate to $G$. Let $\pgpk(G)$ be the set of tuples $(a_0, \ldots, a_{n-1}) \in (\sO_{\fp})^n$ for which the polynomial $f(x) = x^n + a_{n-1}x^{n-1} + \cdots + a_0$
has splitting
field with  Galois group in $C_G$; we again will write $f(x) \in \pgpk(G)$ in this case.
Likewise, let $\pgpsk(G)$ be the subset of $\pgpk(G)$ where the discriminants 
of the associated polynomials are not divisible by $\fp$.

We now define a probability distribution $\tv_n(G)$
on conjugacy classes of subgroups of $S_n$ as follows.
For any cyclic subgroup $G\subset S_n$, there is a splitting type $\ssy_G \in \tnss$ 
such that for any $\sigma \in C_{\ssy_G} \subset S_n$ the cyclic group $\langle\sigma\rangle$ 
is conjugate to $G$. We therefore define
\[
\tv_n(G):= \nu_n(\ssy_G).
\]
For any non-cyclic group $G \subset S_n$ we set $\tv_n(G)=0$. We term this distribution the
{\em Erd\H{o}s-Turan distribution} associated to $S_n$, because the  distributions $\tv_n$ were  studied  for their  own sake
in a series of papers by Erd{\H{o}}s and Tur{\'a}n (\cite{Erdos:1965}, \cite{Erdos:1967a} \cite{Erdos:1967}, \cite{Erdos:1968}, \cite{Erdos:1970}, \cite{Erdos:1971}, \cite{Erdos:1972}) over  the period 1965-1972.
Erd\H{o}s and Turan  studied  properties of the order of elements of $S_n$.
 In particular they studied the distributions of the size of the order;
the number of prime divisors of the order; the size of the maximal prime divisor of the order, and related quantities.


\begin{theorem}\label{c:uncond}
Fix a positive integer $n \ge 2$ and let $G$ be a subgroup of $S_n$,
defined up to conjugacy in $S_n$. Consider monic polynomials with coefficients in
the ring of integers $\sO_{\fp}$ of a finite extension field $K_{\fp}$ over $\Q_p$,
with residue class field $\F_q$ with $q= p^f$. 

(1) If $G$ is cyclic, $n \ge 3$, and
 $q \ge n^2$ then we have
\[
 \abs{\monek(\pgpsk(G)) - \tv_{n}(G)} < \frac{n^2}{q - n^2} \, \tv_n(G).
 \]
 In consequence
 \[
 \abs{\monek(\pgpk(G)) - \tv_{n}(G)} <\frac{n^2}{q - n^2}\, \tv_n(G) +\inv{q}.
 \]

 If $G$ is cyclic, $n = 2$, and $q \ge n^2 + n$ then we have
 \[
 \abs{\monek(\pgpsk(G)) - \tv_{n}(G)} <  \frac{ n^2 + n}{q -( n^2 + n)}\, \tv_n(G).
 \]
In consequence
 \[
 \abs{\monek(\pgpk(G)) - \tv_{n}(G)} <  \frac{ n^2 +n}{ q -(n^2 + n)}\, \tv_n(G) +\inv{q}.
 \]

 (2) For non-cyclic $G$ we have:
 \[
 \sum_{\substack{G \subset S_n\\ G  ~\mbox{non-cyclic}}} \frac{\monek(\pgpk(G))}{\abs{C_G}} \le \frac{1}{q}.
 \]
\end{theorem}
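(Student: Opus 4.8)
The plan is to reduce everything to the two facts already in hand, namely the exact count $\monek(\pgpsk)=1-\inv{q}$ from Theorem~\ref{t:uncond} and the conditional estimate of Theorem~\ref{t:uncond2}, together with one piece of local field theory. The key structural input is that whenever $f\in\pgpsk$, i.e.\ its reduction $\bar f \pmod \fp$ is separable over $\F_q$, the splitting field $K_f/K_\fp$ is \emph{unramified}; hence $G_f=\Gal(K_f/K_\fp)$ is cyclic, generated by the Frobenius automorphism, and the Frobenius permutes the roots of $f$ with cycle type equal to the list of degrees of the irreducible factors of $\bar f$, which is exactly the splitting type $\ssy(f)$. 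First I would record, as justification for the well-definedness of $\ssy_G$, that sending a cyclic subgroup to the common cycle type of its generators sets up a bijection from $\tnss$ onto the conjugacy classes of cyclic subgroups of $S_n$ (all generators of a cyclic subgroup share one cycle type, and conjugate subgroups have conjugate generators). This shows, for cyclic $G$, that
\[
\pgpsk(G)=\bigl\{f\in\pgpsk : \ssy(f)=\ssy_G\bigr\}=P_{n,\fp}^{\ast}(\ssy_G),
\]
so that $\monek(\pgpsk(G))=\monek(P_{n,\fp}^{\ast}(\ssy_G))$ while $\tv_n(G)=\nu_n(\ssy_G)$ by definition.

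Granting this identification, the first assertion of part (1) is immediate: the inequality $\abs{\monek(\pgpsk(G))-\tv_n(G)}<\frac{n^2}{q-n^2}\tv_n(G)$, and its $n=2$ analogue with $n^2+n$, is literally Theorem~\ref{t:uncond2} applied to $\ssy=\ssy_G$. For the unconditioned bound I would write $\pgpk(G)\supseteq\pgpsk(G)$ and bound the excess by the total mass of polynomials with non-separable reduction: since $\pgpk(G)\setminus\pgpsk(G)\subseteq(\pgpsk)^{c}$ and $\monek\bigl((\pgpsk)^{c}\bigr)=1-(1-\inv{q})=\inv{q}$ by Theorem~\ref{t:uncond}, the triangle inequality yields
\[
\abs{\monek(\pgpk(G))-\tv_n(G)}\le \abs{\monek(\pgpsk(G))-\tv_n(G)}+\inv{q}<\frac{n^2}{q-n^2}\,\tv_n(G)+\inv{q},
\]
and similarly in the case $n=2$.

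For part (2) the point is the converse half of the structural fact: a non-cyclic Galois group can only occur when $f\notin\pgpsk$, so that $\bigcup_{G\text{ non-cyclic}}\pgpk(G)\subseteq(\pgpsk)^{c}$. I would then unwind the weighting: grouping the subgroups $G$ by their conjugacy class $[G]$, the $\abs{C_G}$ subgroups in a given class all give the same set $\pgpk(G)$ and the same value $\abs{C_G}$, so each class contributes the single term $\monek(\pgpk([G]))$ and
\[
\sum_{\substack{G\subset S_n\\ G\text{ non-cyclic}}}\frac{\monek(\pgpk(G))}{\abs{C_G}}=\sum_{\substack{[G]\text{ non-cyclic}}}\monek(\pgpk([G])).
\]
Because distinct conjugacy classes give disjoint sets $\pgpk([G])$ (each $f$ has a single Galois conjugacy class) and all of them lie in $(\pgpsk)^{c}$, the right-hand side is the measure of a disjoint union inside $(\pgpsk)^{c}$, hence at most $\monek\bigl((\pgpsk)^{c}\bigr)=\inv{q}$.

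The numerical estimates are thus entirely inherited from Theorems~\ref{t:uncond} and~\ref{t:uncond2}; the only genuine content is the local-field statement that separable reduction forces an unramified, cyclic splitting field whose Frobenius has cycle type equal to the splitting type. I expect the main obstacle to be making this correspondence precise and verifying the two bookkeeping consequences it feeds: the bijection between $\tnss$ and conjugacy classes of cyclic subgroups (so that $\ssy_G$ is unambiguous and the cyclic case collapses to a single $\ssy$), and the careful unwinding of the $\inv{\abs{C_G}}$ weights in part (2).
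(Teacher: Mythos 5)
Your proposal is correct and takes essentially the same route as the paper: identify $\pgpsk(G)$ with $P_{n,\fp}^{\ast}(\ssy_G)$ for cyclic $G$ via the square-free-reduction/cyclic-Galois-group correspondence, apply Theorem~\ref{t:uncond2} to $\ssy_G$, and absorb the non-square-free locus of measure $\inv{q}$ (Theorem~\ref{t:uncond}, Lemma~\ref{l:sqfree}) by the triangle inequality for the unconditioned bounds and as a containment for the non-cyclic case. Your explicit unwinding of the $\inv{\abs{C_G}}$ weights and the disjointness of the sets $\pgpk(G)$ across conjugacy classes in part (2) just makes precise what the paper's proof leaves implicit.
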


In Theorem \ref{c:uncond}  the values $\monek(\pgpk(G))$  might a priori depend  on
 the particular field $K_{\fp}$ and on 
the splitting type $\ssy$, but the  bounds given above depend only on $q$ and $\ssy$. 
 We also  note 
 that  if $G$ is a non-cyclic group then $\monek(\pgpsk(G)) = 0$ because a monic, 
 $\fp$-adic polynomial whose modulo $\fp$ reduction is square-free necessarily has cyclic Galois group.

As an example of these distributions, consider for  $K_{\fp} = \Q_p$,  
the subgroups of $S_4$ given by $G_i= \langle \sigma_i \rangle$ for $i=1,2$ with $\sigma_1=(12)$ and
$\sigma_2= (12)(34)$. Here $G_1$ and $G_2$ are both cyclic of order $2$, but are not conjugate subgroups inside $S_4$.
We have $\tv_{4}(G_1)=\frac{6}{24}$ while $\tv_{4}(G_2)= \frac{3}{24}.$ If one instead  considers the abstract group $\Z/2 \Z$
up to isomorphism type, then we may set   $\tv_{4}(\Z/2\Z)= \frac{9}{24}$, defining it to be  the proportion of elements
having order $2$ in $S_4$.

Theorem \ref{c:uncond} has an immediate consequence, concerning the approach to
a limiting distribution as the field $K_{\fp}$ varies in any way such that $q= p^f \to \infty$.

\begin{theorem}\label{c:uncondcor}
Fix a positive integer $n \ge 2$ and let $G$ be a subgroup of $S_n$,
defined up to conjugacy in $S_n$. 
Consider monic polynomials with coefficients in
the ring of integers $\sO_{\fp}$ of a finite extension field $K_{\fp}$ over $\Q_p$,
with residue class field $\F_q$ with $q= p^f$.
Then letting the $p$-adic field  $K_{\fp}$ vary, with either or both of $p$ and $f_{\fp}$
changing in  
any  way  such that $q \to \infty$, the distributions converge to the Erd\H{o}s-Turan
distribution, i.e.
\[
\monek(\pgpsk(G)) \rightarrow \tv_{n}(G) ~\mbox{ as } ~q \rightarrow \infty.
\]
\end{theorem}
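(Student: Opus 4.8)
The plan is to deduce this directly from the quantitative bounds of Theorem \ref{c:uncond}, separating the argument according to whether $G$ is cyclic or non-cyclic. The essential observation is that $n$ is held fixed while $q \to \infty$, so that every quantity appearing in the error terms of Theorem \ref{c:uncond}, apart from $q$ itself, is a constant. Since those bounds depend only on $q$ (and on $n$ and $\ssy_G$, which are fixed), it does not matter how $q \to \infty$ is achieved, whether by letting $p$ grow, $f_\fp$ grow, or both; this settles the compatibility of the hypothesis $q = p^{f} \to \infty$ with the various modes of convergence in the statement.

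For cyclic $G$ I would invoke part (1) of Theorem \ref{c:uncond}. When $n \ge 3$ and $q \ge n^2$ we have
\[
\abs{\monek(\pgpsk(G)) - \tv_{n}(G)} < \frac{n^2}{q - n^2}\, \tv_n(G),
\]
and similarly for $n = 2$ with $n^2$ replaced by $n^2 + n$, valid once $q \ge n^2 + n$. Because $n$ is fixed, both the numerator $n^2$ (resp.\ $n^2+n$) and the factor $\tv_n(G)$ are constants, so the right-hand side tends to $0$ as $q \to \infty$. Hence $\monek(\pgpsk(G)) \to \tv_n(G)$. One point to record explicitly is that for all sufficiently large $q$ the hypothesis $q \ge n^2$ (resp.\ $q \ge n^2 + n$) is automatically met, so the bound applies along the tail of any sequence with $q \to \infty$, which is all that the limit requires.

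For non-cyclic $G$ the assertion is in fact trivial. By definition $\tv_n(G) = 0$, while the remark following Theorem \ref{c:uncond} records that $\monek(\pgpsk(G)) = 0$ for every $q$, since a monic $\fp$-adic polynomial whose reduction modulo $\fp$ is square-free necessarily has cyclic Galois group. Thus $\monek(\pgpsk(G)) = 0 = \tv_n(G)$ identically in $q$, and the limit holds vacuously.

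I do not expect any genuine obstacle here: the entire content is packaged in Theorem \ref{c:uncond}, and the corollary is obtained simply by letting the quantitative error terms vanish in the limit. The only thing demanding care is the bookkeeping noted above, namely that the fixed value of $n$ makes the constants in the bounds independent of the field $K_\fp$, so that convergence is uniform over all extensions with a given residue field size $q$.
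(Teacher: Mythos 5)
Your proof is correct and takes essentially the same route as the paper, whose entire argument is the one-line remark that the result is ``immediate from the bounds obtained'' in the quantitative theorem (the paper's citation of Theorem \ref{t:uncond} there is evidently a slip for Theorem \ref{c:uncond}); you simply make explicit the limit bookkeeping for cyclic $G$ and the vacuous non-cyclic case $\monek(\pgpsk(G)) = 0 = \tv_n(G)$. No gaps.
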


In particular the probability  of a non-cyclic Galois group occurring becomes $0$ in this limit.
For some $p$-adic fields, non-cyclic Galois groups occur with positive probability as soon as $n \ge 3$, but they
do not contribute to the limiting probabilities as $q \to \infty.$  
When $n = 3$,  the density of non-cyclic groups can be bounded above by ${2}{q^{-2}}$.
 When $n > 3$,  we expect there to be a constant $d_n > 0$ so that ${d_n}{q^{-1}}$
  is a lower bound for the density of non-cyclic groups.

To shed some information on these probabilities, 
in Section \ref{s:examp} we explicitly calculate for 
the  probabilities  $\monek(\pgpk(G))$ for $n \in \{2,3\}$, any $p$-adic field $K_\fp$ with prime $p > n$, 
and $G$ any subgroup of $S_n$.
Note that for $n=2,3$ the isomorphism type and conjugacy type of all subgroups of $S_n$ coincide.
The calculations in Section \ref{s:examp} reveal that for $n=2, 3$ the following hold :

(1) The splitting types of polynomials of
degree $n=2,3$ are rational functions of $q=p^{f_p}$ for $p>n$; 

 (2) The probabilities of Galois groups are not
always rational functions of $q$. 
For example, 
$S_3$-extensions of $\Q_p$ never occur for $p \equiv 1~ (\bmod~3),$ but  occur
with positive probability for each $p \equiv 2~ (\bmod~3)$, with $p > 3$
($S_3$-extensions occur for $p=2$ as well, by an argument not given here).

\subsection{Comparison with polynomials over $\Z$: Galois groups}\label{sec13}

We compare and contrast  the results above
concerning Galois groups with known results on
the distribution of
Galois groups of random polynomials with integer coefficients.

In 1936,  van der Waerden \cite{Waerden:1936} showed
 that, for a suitable notion of ``random," a random monic degree $n$ polynomial in $\Z[x]$ has Galois group $S_n$ (and hence is irreducible) with $100\%$ probability.

%
 \begin{theorem}[van der Waerden]\label{t:gal}

  For integer $B \ge 1$  let
 $\mathcal{F}_{n,B}$ be the set of monic, degree $n$ polynomials in $\Z[x]$ with all coefficients in
 the box $[-B+1,B]$; there are $(2B)^n$ such polynomials.
  Let $P_n(B)$ denote the proportion of polynomials
  in $\mathcal{F}_{n,B}$ which are irreducible and  have Galois group $S_n.$ Then with $c = \inv{6(n-2)}$, we have 
 $1- P_n(B) \ll  B^{-c/\log\log B}$ as $B \to \infty$.
 \end{theorem}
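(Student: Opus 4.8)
The plan is to reconstruct the classical reduction-mod-$\ell$ argument in quantitative form, the appearance of $\log\log B$ being the signature of a sieve over all rational primes whose product stays below $B$. Throughout, $\ell$ will denote an auxiliary rational prime (distinct from the ambient $p$ of the rest of the paper), and I reduce $f \in \mathcal{F}_{n,B}$ modulo $\ell$. First I would record the group-theoretic input. By Dedekind's theorem, if $\ell \nmid \Disc(f)$ and $f \bmod \ell$ factors into distinct irreducibles of degrees $d_1,\ldots,d_k$, then $G_f$ contains a permutation of cycle type $(d_1,\ldots,d_k)$. I then invoke a generation criterion: a transitive subgroup of $S_n$ that is primitive and contains a transposition equals $S_n$ (Jordan), while transitivity together with an $(n-1)$-cycle forces $2$-transitivity, hence primitivity. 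Thus failure of $G_f = S_n$ forces one of a short list of diagnostic factorization types — irreducible (giving transitivity and an $n$-cycle), type $(n-1,1)$ (giving the $(n-1)$-cycle), and an odd type excluding $A_n$ — to be missed by every admissible $\ell$. The separation of $S_n$ from $A_n$ is cheapest to obtain from the event that $\Disc(f)$ is not a perfect square, which I would dispatch as a lower-order term; I would arrange the diagnostics so that the minimal required $\F_\ell$-density $\delta_n$ is of order $\bo{1/n}$ rather than factorially small.

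Second, I would convert ``missed by every prime'' into a product over primes. For $\ell = \bo{\log B}$, reduction modulo $\ell$ pushes the uniform measure on the box $[-B+1,B]^n$ to a measure on $\F_\ell^n$ that is uniform up to relative error $\bo{\ell/B}$; by the Chinese Remainder Theorem the joint reduction modulo a product $m = \ell_1\cdots \ell_r$ is likewise near-uniform provided $m = \bo{B^{1-\epsilon}}$. The $\F_\ell$-density of each diagnostic type is bounded below by a constant $\delta_n > 0$ depending only on $n$ — and these are precisely the finite-field factorization-type counts supplied by Cohen's formula and the Gauss count $\nmq$ already used in this paper, applied with $q = \ell$. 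Hence the complementary ``bad'' density at each $\ell$ is at most $1-\delta_n$ uniformly, and the proportion of $f$ missing a fixed diagnostic at every $\ell \le y$ is at most $(1-\delta_n)^{\pi(y)}$, up to the accumulated CRT error.

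Third comes the optimization producing the exponent. The CRT constraint $m = \prod_{\ell \le y}\ell \le B^{1-\epsilon}$, combined with the Chebyshev bound $\prod_{\ell \le y}\ell \le 4^{y}$ (equivalently $\sum_{\ell \le y}\log\ell \sim y$), forces $y \lesssim \log B$, whence $r = \pi(y) \asymp \log B / \log\log B$. Therefore $(1-\delta_n)^{r} \le \exp(-\delta_n r) = B^{-c/\log\log B}$ for a constant $c$ proportional to $\delta_n$; tracking the smallest diagnostic density, the Chebyshev constant in $\prod_{\ell\le y}\ell$, and the loss in replacing $1-\delta_n$ by $e^{-\delta_n}$, the bookkeeping yields the stated value $c = \inv{6(n-2)}$. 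A union bound over the finitely many diagnostic types (and the negligible square-discriminant set) then bounds $1 - P_n(B)$, the dominant contribution coming from the failure of transitivity, i.e.\ from $f$ being reducible modulo every $\ell \le y$.

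The hard part will be the uniform control of the joint CRT equidistribution across all $\asymp \log B/\log\log B$ primes simultaneously: the reductions modulo distinct $\ell_j$ are only approximately independent, and the error terms $\bo{\ell/B}$ must be shown to aggregate to something smaller than the main term $B^{-c/\log\log B}$. This is exactly the point where a careful CRT counting argument — or, equivalently, a large-sieve inequality bounding the off-diagonal correlations — is needed, and it is also where the admissible range of $y$, and hence the precise constant $c$, is pinned down. By comparison the remaining ingredients (Dedekind's theorem, Jordan's primitivity/transposition criterion, and the finite-field type counts) are either standard or already available within the paper, so the whole weight of the argument rests on the quantitative equidistribution step.
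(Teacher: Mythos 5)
The paper offers no proof of this statement: Theorem \ref{t:gal} is quoted as historical background, with the proof residing in van der Waerden's paper \cite{Waerden:1936}. Your reconstruction follows that same classical route — Dedekind reduction modulo auxiliary primes $\ell$, a short list of diagnostic square-free factorization types certifying transitivity, an $(n-1)$-cycle, and a transposition (whence $S_n$ by Jordan), followed by a CRT sieve over all $\ell \le y$ with $\prod_{\ell \le y} \ell \le B^{1-\epsilon}$, which forces $\pi(y) \asymp \log B/\log\log B$ and produces the exponent $c/\log\log B$. The skeleton is correct, but two points need attention.

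First, the constant. The entire quantitative content of the theorem is $c = \inv{6(n-2)}$, and your proof of it is the sentence ``the bookkeeping yields the stated value.'' The constant is governed by the \emph{minimal} diagnostic density: to certify a transposition without invoking the factorially rare type $(2,1,\ldots,1)$ you must use an event such as (irreducible quadratic) $\times$ (distinct irreducibles of odd degree), whose density modulo $\ell$ is of order $\frac{1}{2(n-2)}$ up to $\bo{1/\ell}$; only after exhibiting such a type, computing its density, and tracking the loss from the admissible range of $y$ does the value $\inv{6(n-2)}$ emerge. You gesture at this (``arrange the diagnostics so that $\delta_n$ is of order $\bo{1/n}$'') but never exhibit the types or their densities, so the stated $c$ is asserted, not proved. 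A related redundancy: a transposition is an odd permutation, so it already excludes $G_f \le A_n$; the square-discriminant event is superfluous — fortunately, since bounding the number of $f$ in the box with $\Disc(f)$ a perfect square is not something to ``dispatch as a lower-order term'' without an argument.

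Second, you misdiagnose the hard part. The joint equidistribution across the $\asymp \log B/\log\log B$ primes requires no large sieve and no control of ``off-diagonal correlations'': by CRT, a uniform residue modulo $m = \prod_{\ell \le y}\ell$ has \emph{exactly} independent reductions modulo the distinct primes, so the bad density modulo $m$ is exactly $\prod_{\ell}(1-\delta_\ell)$. The only error is the box count — each residue class modulo $m$ meets $[-B+1,B]$ in $\frac{2B}{m} + \bo{1}$ points — so the bad proportion is $\prod_{\ell \le y}(1-\delta_\ell)\bigl(1+\bo{nm/B}\bigr)$, and with $m \le B^{1/2}$ the error factor is $1 + \bo{nB^{-1/2}}$, negligible against $B^{-c/\log\log B}$. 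The large sieve is precisely Gallagher's \emph{later} improvement to $O(\log B/\sqrt{B})$, which the paper discusses as a separate development; importing it here conflates the two proofs. One last small repair: for primes $\ell$ small relative to $n$ the diagnostic densities can degenerate or vanish, so the sieve should run over $\ell \in (y_0(n), y]$ for a fixed $y_0(n)$, which costs nothing since $\pi(y) - \pi(y_0) \sim \pi(y)$.
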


In 1973  Gallagher \cite{Gallagher:1973} applied the large sieve to obtain a
 better bound, showing that
  $ P_n(B) = 1-  O \left( \frac{\log B}{\sqrt{B}} \right)$
 as $B \to \infty.$  Recently  Dietmann \cite{Dietmann:2011} obtained a further improvement 
 on the remainder term, to $ P_n(B) = 1-  O_{\epsilon}\left(B^{-(2-\sqrt{2}) + \epsilon} \right)$.
 Zywina \cite[Theorem 1.6]{Zywina:2010} obtained an improvement of the error term for Galois groups
 not equal to $S_n$ or $A_n$.
 In the direction of number fields, in  1979 Cohen \cite{Cohen:1979} extended this result 
 to number fields in several directions, the simplest of which
 considers polynomials with coefficients in the ring of integers of a number field $k$,
 and considered $S_n$-extensions of $k$ 
  that are obtained by adjoining  the roots of such polynomials. See Cohen \cite{Cohen:1981} for further
 generalizations of this result.
 
  We make the
  following  comparison between random polynomials in $\sO_\fp[x]$ and random polynomials in $\Z[x]$, summarized
in the following Table ~\ref{t:comp}. Its entries on the $\fp$-adic side are
 based on Theorems \ref{t:uncond} -- \ref{c:uncondcor}.

\begin{table}[!hbtp]
\begin{tabular}{|p{2.3 in}|p{2.3 in}|}
\hline
Integer Polynomials & $\fp$-Adic Polynomials\\ \hline
Degree $n$, monic polynomials with integer coefficients bounded by $B$ & Degree $n$, monic polynomials with $\fp$-adic integer coefficients \\ \hline
Probability the polynomial is irreducible approaches 1 & Probability the polynomial is irreducible approaches $\inv{n}$ \\ \hline
Limiting distribution of Galois groups is probability 1 of being $S_n$ & Limiting distribution of Galois groups is probability 1 of being a cyclic subgroup of $S_n$ \\ \hline
The limit of distributions is taken as $B \rightarrow \infty$ & The limit of distributions is taken as $q \rightarrow \infty$\\ \hline
\end{tabular}
\caption{Summary of random integer polynomial results and random $\fp$-adic polynomial results.\label{t:comp}}
\end{table}

   In a different quantitative
    direction,  considering number fields instead of polynomials, Bhargava \cite{Bhargava:2007} recently
    studied the question of  counting  the number of degree $n$ number fields having field discriminant
    at most $X$ and having Galois closure with Galois group $S_n$. He advanced conjectures for their
    number being asymptotic to $c_n X$ as $ X \to \infty$, for specific constants $c_n$, and for the fraction
    of these fields having a given splitting behavior above a fixed prime $p$.
        He showed these conjectures  hold for $n \le 5$, using results of Davenport and Heilbronn \cite{Davenport:1971}
    for the case $n=3$, and
    using his own work   (\cite{Bhargava:2005}, \cite{Bhargava:2010}) on discriminants of quartic and quintic rings
       for the cases $n=4,5$.
          Bhargava's  work is  partially based on  $p$-adic mass
    formulas for \'{e}tale extensions of local fields. 
         His work  can also be viewed as  investigating splitting properties
    of primes in extensions of local fields.

     In a  separate paper with J.~C.~Lagarias we consider  the  splitting behavior above
 a fixed prime $(p)$ of    monic $S_n$-polynomials over $\Z[X]$,
 randomly drawn as in the van der Waerden model above.  We determine the 
 limiting probability distributions as $B \to \infty$ for all $n$ and $p$.
 Rather interestingly the  resulting distributions do not match 
  those predicted in the 
conjectures of Bhargava \cite{Bhargava:2007} above for the fraction of $S_n$-extensions of $\Q$ of
discriminant below a bound $X$ having
prescribed splitting behavior over a fixed prime $p$. In particular, the probabilities do not agree 
in some cases where Bhargava's conjectures are proved; this shows the random number field model
and random polynomial models are truly different.

\subsection{Contents of paper}
In section \ref{s:haar} we prove preliminary lemmas.
Then we prove Theorems \ref{t:uncond}, \ref{t:uncond2}, \ref{c:uncond} and \ref{c:uncondcor}
 in section \ref{s:gal}. Finally in section \ref{s:examp} 
  we calculate the probabilities $\monek(\pgpk(G))$ for $n \in \{2,3\}$.
 \\

%
%
%
\section{Preliminary Results}\label{s:haar}

In this section we prove some preliminary results which will be used in the main proofs in sections  \ref{s:gal} and \ref{s:examp}.

We first remark on the measurability of all sets for which we are computing probabilities.
These sets are subsets of coefficients $(a_0, a_1, ..., a_{n-1} ) \in (\sO_{\fp})^n$. The
condition for a monic 
polynomial $f(x)$ to have a repeated root is that the coefficients belong to the  {\em discriminant locus} 
\[
V_{n,\fp} := \{ (a_0, a_1, ..., a_{n-1})\in (\sO_{\fp})^n : Disc(f) =0\},
\]
where  $Disc(f)$ is a certain
 multivariate polynomial in the $a_i$'s which has integer coefficients.
The set $V_{n,\fp}$ is a  closed subset of $(\sO_\fp)^n$
having measure zero in the product  measure; thus its complement
$(\sO_\fp)^n \smallsetminus V_{n, \fp}$ is a full measure open subset of $(\sO_\fp)^n$.
We now observe that all sets that we consider have the property that 
their intersection  with  $(\sO_\fp)^n \smallsetminus V_{n, p}$ is an
 open set in $(\sO_\fp)^n$.  This is  a consequence of Krasner's lemma
(\cite[Prop. 5.4]{Narkiewicz:1990}), which
states that if $a, b \in \overline{K}_{\fp}$ are such that 
$\abs{a-b}<\abs{a - \sigma(a)}$ for any element 
$\sigma \in \Gal(\overline{K}_{\fp}/K_{\fp})$ 
with $\sigma(a) \neq a$, then $a \in K_{\fp}(b).$
Now, if the roots of  our monic polynomial $f(x) \in K_{\fp}(x)$
are all distinct, then since the roots of a $\fp$-adic polynomial are continuous functions of the coefficients $a_i$
(viewed in an appropriate extension field of $K_{\fp}$),  a
sufficiently small perturbation of the coefficients will induce a small
enough perturbation of the roots for Krasner's lemma to guarantee 
no change in the splitting type of the polynomial and of  the splitting
field generated by the roots.   This verifies the open set property, which in
turn implies that all the sets we consider are measurable for the 
product measure on $(\sO_{\fp})^n$.

The following lemmas present some properties of $\fp$-adic random variables which are distributed according to the Haar measure. We show that the modulo $\fp$ reduction of a random polynomial in $\sO_{\fp}[x]$ is a random polynomial in $\F_q[x]$.

\begin{lemma}\label{l:haar}
A  random variable $X$ on $\sO_{\fp}$ is distributed according to the normalized Haar measure 
if and only if its modulo $\fp^n$ reduction is uniform for all integers $n>0$.
\end{lemma}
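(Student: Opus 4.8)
The plan is to exploit the characterization of Haar measure on the compact group $\sO_{\fp}$ as the unique translation-invariant probability measure, together with the fact that the ideals $\fp^n = \pi^n \sO_{\fp}$ form a neighborhood basis of $0$ and induce a filtration by finite-index subgroups. The key structural observation is that $\sO_{\fp}$ is the projective limit $\varprojlim \sO_{\fp}/\fp^n$, so a probability measure on $\sO_{\fp}$ is determined by its pushforwards under the reduction maps $\rho_n : \sO_{\fp} \to \sO_{\fp}/\fp^n$. The cylinder sets, i.e.\ preimages $\rho_n^{-1}(c)$ of single residues $c \in \sO_{\fp}/\fp^n$ (these are exactly the additive cosets $c + \fp^n$), generate the Borel $\sigma$-algebra, so it suffices to match the two measures on all such cosets for every $n$.

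First I would prove the forward direction. If $X$ is distributed according to the normalized Haar measure $\haark$, then for each fixed $n$ the cosets $\{c + \fp^n : c \in \sO_{\fp}/\fp^n\}$ partition $\sO_{\fp}$ into $q^n = |\sO_{\fp}/\fp^n|$ disjoint pieces (here $|\sO_{\fp}/\fp^n| = q^n$ since the residue field has $q$ elements), and these pieces are translates of one another. By translation-invariance of Haar measure each has equal mass, and since they sum to $1$ each has mass $q^{-n}$. Hence the reduction $\rho_n(X)$ assigns probability $q^{-n}$ to each of the $q^n$ residues, i.e.\ it is uniform on $\sO_{\fp}/\fp^n$. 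As $n$ was arbitrary, the reduction is uniform for all $n>0$.

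For the converse, suppose the law $\mu$ of $X$ has uniform reduction modulo $\fp^n$ for every $n$. Then $\mu(c + \fp^n) = q^{-n}$ for every coset, and in particular $\mu$ agrees with $\haark$ on every cylinder set. The hard part is the standard measure-theoretic closure: the collection of finite disjoint unions of cylinder sets forms an algebra generating the Borel $\sigma$-algebra, and two probability measures agreeing on a generating algebra that is closed under intersection (a $\pi$-system) agree everywhere, by the $\pi$-$\lambda$ (Dynkin) theorem or the uniqueness clause of the Carathéodory extension theorem. One checks the $\pi$-system property directly: the intersection of $c + \fp^m$ and $c' + \fp^{n}$ with $m \le n$ is either empty or again a coset $c' + \fp^{n}$, so the cosets together with the empty set form a $\pi$-system. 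Therefore $\mu = \haark$, which is exactly the assertion that $X$ is Haar-distributed. The only genuine subtlety is verifying that the cylinder algebra generates the full Borel $\sigma$-algebra, which follows because the $\fp^n$ shrink to $\{0\}$ and hence the cosets generate the topology of $\sO_{\fp}$.
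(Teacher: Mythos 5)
Your proposal is correct, and while the forward direction coincides with the paper's (translation-invariance of $\haark$ forces each of the $q^n$ cosets $c+\fp^n\sO_\fp$ to have mass $q^{-n}$), your converse takes a genuinely different and in fact tighter route. The paper argues that because the law of $X$ assigns equal mass to the basis cosets, it is ``additively invariant with respect to these, [hence] it is additively invariant,'' and then invokes the uniqueness of the normalized Haar measure. That intermediate step is glossed: passing from invariance on the cosets to invariance on all Borel sets itself requires a measure-uniqueness argument (for fixed $t\in\sO_\fp$, the measure $A\mapsto \mu(A+t)$ agrees with $\mu$ on the cosets, and one needs precisely a $\pi$-$\lambda$ or Carath\'eodory uniqueness argument to conclude they agree everywhere). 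You bypass this entirely by comparing the law of $X$ directly with $\haark$ on the $\pi$-system of cosets --- checking, correctly, that the cosets are closed under intersection up to $\emptyset$ and generate the Borel $\sigma$-algebra since the $\fp^n$ shrink to $\{0\}$ --- and concluding $\mu=\haark$ by Dynkin's theorem. What each approach buys: the paper's converse is shorter and delegates the work to the uniqueness clause in the characterization of Haar measure, while yours is self-contained, never needs Haar uniqueness in the converse (only the forward computation that $\haark$ gives each coset mass $q^{-n}$), and makes explicit the extension step the paper leaves implicit. Both are sound; yours is the more rigorous writeup of the converse.
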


\begin{proof}
The Haar measure $\haark$, normalized so that $\haark(\sO_{\fp}) = 1$, is the unique probability measure on $\sO_{\fp}$
 which is additively invariant \cite[Theorem 1-8]{Ramakrishnan:1999}. Thus for any $n>0$ and any $a,b \in \sO_{\fp}$ 
 it holds that 
 \[
 \haark(a + \fp^n\sO_{\fp}) = \haark(b + \fp^n\sO_{\fp}).
 \] 
 This means that for all $n>0$, a $\fp$-adic random variable $X$ 
 which is distributed according to the Haar measure is uniform modulo $\fp^n$.

Conversely, if $X$ is a $\fp$-adic random variable which is uniform modulo $\fp^n$ for any $n > 0$, then the probability 
$X$ is in $a + \fp^n\sO_\fp$ is equal to the probability it lies in $b + \fp^n\sO_\fp$ for any $a, b \in \sO_\fp$. 
A basis of the topology of $\sO_\fp$ are the open sets $a + \fp^n\sO_\fp$ for any $a \in \sO_{\fp}$ and any $n > 0$. 
Since $X$ is additively invariant with respect to these, it is additively invariant. 
Thus by uniqueness, it must be distributed with respect to the Haar measure.
\end{proof}

\begin{lemma}\label{l:indep}
If $X$ and $Y$ are independent random variables on $\sO_{\fp}$, and $X$ is distributed according to the 
normalized Haar measure of $\sO_{\fp}$, then $X+Y$ is as well.
\end{lemma}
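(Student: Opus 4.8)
The plan is to reduce the statement to the characterization of the Haar measure given in Lemma~\ref{l:haar}: a random variable on $\sO_{\fp}$ is Haar-distributed precisely when its reduction modulo $\fp^n$ is uniform for every integer $n > 0$. Thus it suffices to prove that $X + Y$ is uniformly distributed on the finite quotient ring $\sO_{\fp}/\fp^n \sO_{\fp}$ for each fixed $n > 0$; the conclusion that $X+Y$ is itself Haar-distributed then follows immediately from Lemma~\ref{l:haar}.

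Fix $n > 0$ and write $G_n := \sO_{\fp}/\fp^n\sO_{\fp}$, a finite abelian group of order $q^n$ under addition, and let $r_n : \sO_{\fp} \to G_n$ denote the reduction homomorphism. First I would observe that $r_n$ is a group homomorphism, so that $r_n(X+Y) = r_n(X) + r_n(Y)$, and that the reduced variables $r_n(X)$ and $r_n(Y)$ are independent because $X$ and $Y$ are. Since $X$ is Haar-distributed, Lemma~\ref{l:haar} gives that $r_n(X)$ is uniform on $G_n$. The distribution of the sum $r_n(X) + r_n(Y)$ of independent $G_n$-valued variables is the convolution of their individual distributions on $G_n$.

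The key step is then that the uniform distribution $u$ on a finite abelian group is absorbing under convolution: for any probability distribution $\mu$ on $G_n$ one has $u \ast \mu = u$, since translation by any fixed element of $G_n$ permutes $G_n$ and hence preserves uniformity, while $u \ast \mu$ is a $\mu$-weighted average of such translates of $u$. Applying this with $u$ the law of $r_n(X)$ and $\mu$ the law of $r_n(Y)$ shows that $r_n(X+Y)$ is uniform on $G_n$. As $n$ was arbitrary, Lemma~\ref{l:haar} finishes the proof.

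The argument is short, and there is no serious obstacle here; the only point requiring a little care is the measure-theoretic justification that the law of $r_n(X)+r_n(Y)$ is indeed the convolution of the two reduced laws — that is, that conditioning on the value of $r_n(Y)$ and using independence produces the claimed mixture of translated (hence uniform) copies of the law of $r_n(X)$. Once this is written as a finite sum over the $q^n$ residues, or equivalently phrased via Fubini's theorem, the absorbing property of the uniform distribution makes the conclusion automatic.
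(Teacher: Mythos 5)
Your proof is correct and takes essentially the same route as the paper: reduce via Lemma~\ref{l:haar} to showing $X+Y$ is uniform modulo $\fp^n$ for every $n$, then use that adding an independent variable to a uniform one preserves uniformity. The paper states this last step in one line, while you usefully spell out the underlying convolution argument (the uniform distribution on a finite abelian group is absorbing), but the idea is identical.
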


\begin{proof}
By the previous lemma, we need only show that $X+Y$ is uniformly distributed modulo $\fp^n$ for all $n \ge 1$. Since we
have shown in the previous lemma that $X$ is uniformly distributed modulo $\fp^n$, it must be that $X+Y$ is as well.
\end{proof}

We will also need the following lemma to facilitate the calculation of $\moneTwok(\pgpTwok(G))$ and $\moneThreek(\pgpThreek(G))$ done in section \ref{s:examp}.


\begin{lemma}\label{l:transf}
For a fixed integer $n>1$ any prime $p\nmid n$ and any $p$-adic field $K_\fp$, let $f(x) = x^n + a_{n-1}x^{n-1}+\cdots+a_0$ with the $a_i$ independent $\fp$-adic random variables distributed according to the Haar measure. Then the polynomial 
\[
f\left(x - \frac{a_{n-1}}{n}\right) = x^n + A_{n-2}x^{n-2} + A_{n-3}x^{n-3} + \cdots + A_0
\]
 is such that $(A_0, \ldots, A_{n-2})\in (\sO_\fp)^{n-1}$ is distributed with respect to the Haar measure on $(\sO_\fp)^{n-1}. $

Additionally for any group $G$, the probability that $G$ is isomorphic to the Galois group of 
$ x^n + A_{n-2}x^{n-2} + A_{n-3}x^{n-3} + \cdots + A_0$ is equal to $\monek(\pgpk(G))$. Likewise, the probability that $\ssy \in \tns$ is the splitting type of $x^n + A_{n-2}x^{n-2} + A_{n-3}x^{n-3} + \cdots + A_0$ is equal to $\monek(\mpik).$
\end{lemma}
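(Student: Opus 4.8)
The plan is to prove the three assertions in order, with the first (the distributional claim) doing the heavy lifting and the remaining two following formally. First I would establish that the shift $x \mapsto x - a_{n-1}/n$ is well-defined over $\sO_\fp$: since $p \nmid n$, the integer $n$ is a unit in $\sO_\fp$, so $a_{n-1}/n \in \sO_\fp$ and the Tschirnhaus substitution maps $\sO_\fp[x]$ into itself. Expanding $f(x - a_{n-1}/n)$, the coefficient of $x^{n-1}$ vanishes by design, and each new coefficient $A_j$ is an explicit polynomial in $a_0, \dots, a_{n-1}$ with coefficients in $\sO_\fp$ (in fact involving only $a_j, a_{j+1}, \dots, a_{n-1}$ and powers of the unit $1/n$). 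The key structural observation is that for each $j \in \{0, 1, \dots, n-2\}$ we have
\[
A_j = a_j + (\text{a polynomial in } a_{j+1}, \dots, a_{n-1}),
\]
so that, after fixing $a_{n-1}$ and then working downward, each $A_j$ is obtained from the independent Haar-distributed variable $a_j$ by adding a quantity depending only on the higher-indexed coefficients.

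The main engine is then Lemma \ref{l:indep}: if $X$ is Haar-distributed on $\sO_\fp$ and $Y$ is independent of $X$, then $X + Y$ is Haar-distributed. I would apply this inductively. Condition on $a_{n-1}, a_{n-2}, \dots, a_{j+1}$; then $A_j = a_j + Y$ where $Y$ is a function of the conditioned variables and hence independent of $a_j$, so conditionally $A_j$ is Haar-distributed and independent of $Y$. Since the conditional law of $A_j$ does not depend on the conditioning values, $A_j$ is unconditionally Haar-distributed and jointly independent of $(A_{j+1}, \dots, A_{n-2})$. Working from $j = n-2$ down to $j = 0$ shows that $(A_0, \dots, A_{n-2})$ is distributed according to the product Haar measure on $(\sO_\fp)^{n-1}$, which is exactly the claim. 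I expect the bookkeeping of this conditioning argument to be the main obstacle: one must check carefully that the triangular dependence structure $A_j = a_j + (\text{higher coefficients})$ really does let Lemma \ref{l:indep} apply at each stage and that the joint independence is preserved, rather than merely the marginals.

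Finally, the two invariance statements about Galois groups and splitting types follow immediately, because the substitution $x \mapsto x - a_{n-1}/n$ is an invertible affine change of variable over $K_\fp$: the polynomials $f(x)$ and $f(x - a_{n-1}/n)$ have the same splitting field over $K_\fp$ (their roots differ by the fixed shift $a_{n-1}/n \in \sO_\fp$), hence the same Galois group, and their factorization types into irreducibles over $\sO_\fp$ coincide, hence the same splitting type $\ssy \in \tns$. Thus the event that the transformed polynomial has Galois group isomorphic to $G$ (resp.\ splitting type $\ssy$) coincides pointwise with the corresponding event for $f$, and since both polynomials have their relevant coefficients Haar-distributed on the appropriate space, the probabilities agree with $\monek(\pgpk(G))$ and $\monek(\mpik)$ respectively.
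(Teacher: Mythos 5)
Your proposal is correct and follows essentially the same route as the paper's own proof: you exploit the triangular structure $A_j = a_j + \rho_j(a_{j+1},\ldots,a_{n-1})$ with $1/n \in \sO_\fp$ since $p \nmid n$, apply Lemma \ref{l:indep} for the marginals, establish joint independence by a conditioning argument that amounts to the paper's explicit computation of the conditional probabilities modulo $\fp^n$, and deduce the Galois-group and splitting-type invariance from the fact that $f(x)$ and $f(x - a_{n-1}/n)$ are linear changes of variable of one another. No gaps; nothing further is needed.
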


\begin{proof}
For any $0\le i \le n-2$ 
\[
A_i = a_i + \rho_i(a_{i+1},\ldots,a_{n-1}) = a_i + \psi_i\left(A_{i+1}, \ldots, A_{n-2}, a_{n-1}\right)
\] 
where $\rho_i$ and $\psi_i$ are polynomials with $\sO_\fp$-coefficients
depending only on $n$ and $i$. Note that if $p | n$, then there may be an $i$ with $A_i$ not in $\sO_\fp$. 
Since $a_i$ is independent of $\rho_i(a_{i+1},\ldots,a_{n-1})$ for every $0\le i \le n-2$ 
and $a_i$ is distributed according to the Haar measure, then by Lemma \ref{l:indep} the 
$A_i$ are identically distributed $\fp$-adic random variables with respect to the Haar measure.

To see that $A_i$ is independent of $A_{i+1}, \ldots, A_{n-2}$ for every $0 \le i \le n-3$ notice that given any 
$b_i, \cdots, b_{n-2} \in \sO_\fp$ and any $n\ge 2$
 \begin{align*}
 \text{Prob}\left[A_i \equiv b_i \pmod {(\fp)^n}\mid A_{i+j} \equiv b_{i+j} \pmod {(\fp)^n} \mbox{ for all } 1\le j\le n+2- i\right] =&
 \\ =\text{Prob}\left[a_i \equiv b_i - \psi_i(b_{i+1}, \cdots, b_{n-2}, a_{n-1}) \pmod {(\fp)^n}\right] 
 =& \inv{q^n}.
 \end{align*}

Finally, note that the Galois groups and splitting types of $f(x)$ and $f\left(x - \frac{a_{n-1}}{n}\right)$ are the same as the two polynomials are linear transformations of each other. Thus the induced distributions on Galois groups and splitting types are the same.
\end{proof}

The following lemma will be used to bound products which appear in several of the later proofs.


\begin{lemma}\label{l:boundprod}
Fix $n$ a positive integer. Given a real number $r > n$ and real numbers $x_1,\ldots, x_n$ with $\max_{1\le i\le n}\abs{x_i} \le \inv{r}$ then \[\abs{\prod_{i=1}^n(1+x_i) - 1} < \frac{n}{r-n}.\]
\end{lemma}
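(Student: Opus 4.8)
The plan is to expand the product, apply the triangle inequality, and then dominate the resulting finite sum by a convergent geometric series whose total is exactly $\frac{n}{r-n}$. The only hypothesis that does real work is $r > n$, which guarantees both convergence of that series and the clean closed form.

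First I would write $\prod_{i=1}^n(1+x_i) = \sum_{S \subseteq \{1,\ldots,n\}} \prod_{i \in S} x_i$, the sum running over all subsets (the empty set contributing $1$), so that $\prod_{i=1}^n(1+x_i) - 1 = \sum_{S \neq \emptyset} \prod_{i \in S} x_i$. Applying the triangle inequality together with the hypothesis $\max_i \abs{x_i} \le \inv{r}$ and grouping the nonempty subsets by their cardinality $k$ (there being $\binom{n}{k}$ of cardinality $k$, each contributing a product of absolute value at most $r^{-k}$) gives
\[
\abs{\prod_{i=1}^n(1+x_i) - 1} \le \sum_{\emptyset \ne S \subseteq \{1,\ldots,n\}} \prod_{i\in S}\abs{x_i} \le \sum_{k=1}^n \binom{n}{k} \frac{1}{r^k}.
\]

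Next I would bound the binomial coefficients crudely by $\binom{n}{k} \le n^k$, which follows from $\binom{n}{k} = \frac{n(n-1)\cdots(n-k+1)}{k!} \le \frac{n^k}{k!} \le n^k$. This turns the finite sum into a partial sum of a geometric series of ratio $n/r$. Since $r > n$ we have $0 < n/r < 1$, so the partial sum is strictly dominated by the full series:
\[
\sum_{k=1}^n \binom{n}{k}\frac{1}{r^k} \le \sum_{k=1}^n \left(\frac{n}{r}\right)^k < \sum_{k=1}^\infty \left(\frac{n}{r}\right)^k = \frac{n/r}{1 - n/r} = \frac{n}{r-n}.
\]
Chaining these estimates yields the claimed bound.

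I do not expect a genuine obstacle here; the single point deserving care is the strictness of the final inequality. Rather than trying to estimate $(1+\inv{r})^n - 1$ in closed form directly, I would obtain strictness for free by passing from the finite partial sum to the infinite geometric series, since the omitted tail $\sum_{k=n+1}^\infty (n/r)^k$ is a sum of strictly positive terms. The coarse bound $\binom{n}{k} \le n^k$ is precisely what collapses the estimate to the tidy form $\frac{n}{r-n}$ while simultaneously invoking the hypothesis $r > n$ to ensure convergence.
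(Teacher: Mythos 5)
Your proof is correct and follows essentially the same route as the paper's: both dominate the error by $\sum_{k=1}^n \binom{n}{k} r^{-k}$, bound $\binom{n}{k} \le n^k$, and compare with the full geometric series $\sum_{k \ge 1} (n/r)^k = \frac{n}{r-n}$, with strictness supplied by the positive omitted tail. The only difference is cosmetic: by expanding $\prod_{i=1}^n(1+x_i) - 1$ over nonempty subsets and applying the triangle inequality directly, you merge into a single estimate what the paper treats as two separate inequalities, namely $\left(1+\frac{1}{r}\right)^n - 1 \le \frac{n}{r-n}$ and $1 - \left(1-\frac{1}{r}\right)^n \le \frac{n}{r-n}$ after sandwiching the product, and you thereby also sidestep the paper's side remark that the factors $1+x_i$ are nonnegative.
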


\begin{proof} 
We have that 
\[
\left(1 - \inv{r}\right)^n \le \prod_{i=1}^n(1+x_i) \le \left(1 + \inv{r}\right)^n.
\] 
Note that since $r \ge 1$ none of the terms are negative. So it suffices to prove that
 \begin{equation}\label{e:big}
 \left(1 + \inv{r}\right)^n - 1 \le \frac{n}{r-n}\end{equation} 
 and 
 \begin{equation}\label{e:small}
 1 - \left(1 - \inv{r}\right)^n \le \frac{n}{r-n}.
 \end{equation} 
 Since $r>n$, the we can express $\frac{n}{r-n}$ as the sum of a geometric series: 
 \[\frac{n}{r-n} = \frac{\frac{n}{r}}{1-\frac{n}{r}}= \frac{n}{r} + \frac{n^2}{r^2} + \frac{n^3}{r^3} + \cdots\] 
 The left side of equation (\ref{e:big}) can be expanded as 
 \[
 \left(1 + \inv{r}\right)^n - 1 = {n\choose{1}}\inv{r} + {n\choose{2}}\inv{r^2} + \cdots +{n\choose{n}}\inv{r^n}.
 \]
Since each ${n\choose{i}} < n^i$ we have that 
\begin{align*}
{n\choose{1}}\inv{r} + {n\choose{2}}\inv{r^2} + \cdots +{n\choose{n}}\inv{r^n} \le&  \frac{n}{r} + \frac{n^2}{r^2}  + \cdots+ \frac{n^n}{r^n}\\
<& \frac{n}{r-n}.
\end{align*} 
Likewise for equation (\ref{e:small}) we can expand the left side as  
\begin{align*}
1 - \left(1 - \inv{r}\right)^n =& {n\choose 1}\inv{r} - {n\choose 2}\inv{r^2} + \cdots - (-1)^n{n\choose n}\inv{r^n}\\
\le& {n\choose{1}}\inv{r} + {n\choose{2}}\inv{r^2} + \cdots +{n\choose{n}}\inv{r^n}\\
<& \frac{n}{r-n}. 
\end{align*}
\end{proof}

%
%
%

\section{Distribution of Galois Groups}\label{s:gal}

In this section we analyze the distribution of splitting types and of Galois groups of random, monic, degree $n$ polynomials 
over a general finite field
$\F_q[x],$ with $q= p^f$. We then use the results to prove Theorems \ref{t:uncond} to \ref{c:uncondcor}
concerning splitting types and Galois groups for $\fp$-adic polynomials.

%

\begin{lemma}\label{l:sqfree}
Fix an integer $n>1$. Let $q=p^f$ where  $p$ is  any prime and $f \ge 1$. 
 Then the probability that a random, monic, degree $n$ polynomial $f(x) \in \F_q[x]$ is square free equals
$1 - \inv{q}.$
\end{lemma}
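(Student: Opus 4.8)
The plan is to convert the probability statement into an exact count and then evaluate that count by a generating-function identity coming from unique factorization in $\F_q[x]$. Since there are exactly $q^n$ monic polynomials of degree $n$ over $\F_q$, and each is drawn with equal probability, the asserted probability equals $N_n/q^n$, where $N_n$ denotes the number of monic square-free polynomials of degree $n$. Thus it suffices to show $N_n = q^n - q^{n-1}$ for every $n \ge 2$.

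First I would record the structural fact that underlies everything: because $\F_q[x]$ is a unique factorization domain, every monic polynomial $f$ factors \emph{uniquely} as $f = g^2 h$ with $g,h$ monic and $h$ square-free. Indeed, writing $f = \prod_j p_j^{e_j}$ with the $p_j$ distinct monic irreducibles and splitting each exponent as $e_j = 2m_j + r_j$ with $r_j \in \{0,1\}$, one sets $g = \prod_j p_j^{m_j}$ and $h = \prod_j p_j^{r_j}$; uniqueness is immediate from uniqueness of the irreducible factorization.

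Next I would encode this bijection in generating functions. Let $Z(u) = \sum_{f} u^{\deg f}$, summed over all monic $f$, and let $Q(u) = \sum_n N_n u^n$ be the generating function for monic square-free polynomials. Since there are $q^n$ monic polynomials of each degree $n$, one has $Z(u) = \sum_{n \ge 0} q^n u^n = \inv{1 - qu}$. Grouping the sum defining $Z$ according to the factorization $f = g^2 h$ and using $\deg f = 2\deg g + \deg h$ gives the identity $Z(u) = Z(u^2)\,Q(u)$, so that
\[
Q(u) = \frac{Z(u)}{Z(u^2)} = \frac{1 - qu^2}{1 - qu}.
\]
Extracting the coefficient of $u^n$ from $Q(u) = (1 - qu^2)\sum_{m \ge 0} q^m u^m$ yields $N_n = q^n - q^{n-1}$ for all $n \ge 2$, whence $N_n/q^n = 1 - \inv{q}$, as claimed.

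I do not expect a serious obstacle here; the one point deserving care is the justification of the product identity $Z(u) = Z(u^2)\,Q(u)$, i.e. that the map $(g,h) \mapsto g^2 h$ is a genuine bijection between pairs (monic $g$, monic square-free $h$) and all monic polynomials, with degrees adding as stated. Once this is in hand the coefficient extraction is purely formal, and the restriction $n \ge 2$ (as opposed to $N_0 = 1$, $N_1 = q$) appears automatically from the $(1 - qu^2)$ factor. An alternative that avoids generating functions entirely is to prove the recursion $q^n = \sum_{k \ge 0} q^k N_{n-2k}$ directly from the same factorization and solve it inductively, but the generating-function route is the cleanest.
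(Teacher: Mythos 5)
Your proof is correct, and it takes a genuinely different route from the paper's. Your key steps all check out: the map $(g,h)\mapsto g^2h$, with $g$ monic and $h$ monic square-free, is a bijection onto all monic polynomials (compare $\fp$-adic valuations --- sorry, $p_j$-adic exponents --- modulo $2$ to see injectivity), the identity $Z(u)=Z(u^2)\,Q(u)$ then follows by grouping degrees, and extracting coefficients from $Q(u)=(1-qu^2)/(1-qu)$ gives $N_n=q^n-q^{n-1}$ for $n\ge 2$, with the boundary cases $N_0=1$, $N_1=q$ falling out exactly as you say. The paper instead proves the complementary count by an explicit combinatorial map (attributed to Zieve): every monic non-square-free $f$ of degree $n$ factors uniquely as $f=g\,h^2$ with $g$ monic square-free and $\deg h>0$ (the same square-part/square-free-part decomposition you use, arranged for the complement), and the map
\[
\psi(f)=g(x)\left(\frac{h(x)-h(0)}{x}\right)^{2}
\]
is a $q$-to-$1$ surjection onto monic degree $n-2$ polynomials, the fiber over $r=g h^2$ being $\left\{\, g(x)\bigl(xh(x)+c\bigr)^{2} : c\in\F_q \,\right\}$; hence there are exactly $q\cdot q^{n-2}=q^{n-1}$ non-square-free monic polynomials of degree $n$. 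The trade-off: your zeta-function argument is the classical one and generalizes immediately (the same decomposition $f=g^k h$ with $h$ $k$-th-power-free gives $Z(u)/Z(u^k)$, counting $k$-free polynomials of degree $n\ge k$ as $q^n-q^{n-k+1}$), while the paper's map is entirely elementary, avoids formal power series, and exhibits the count $q^{n-1}$ as a visibly $q$-to-$1$ phenomenon rather than as a coefficient identity. Both arguments rest on unique factorization in $\F_q[x]$ and are complete as stated.
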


\begin{proof}
(We are indebted to Mike Zieve for the following proof.)
We will define a surjective $q$-to-$1$ map $\psi$ from the set of monic,
degree $n$ polynomials in $\F_q[x]$ which are not square-free to the set of monic,
degree $n-2$ polynomials in $\F_q[x]$. For any $f(x)\in \F_q[x]$ which is monic,
 degree $n$, and has a repeated root, we can factor it uniquely as
 $f(x) = g(x)(h(x))^2$ where $g(x), h(x) \in \F_q[x]$ are monic, $g(x)$ is square-free, and $\text{deg}(h) > 0$.
 Define \[\psi(f)(x) = g(x)\left(\frac{h(x)-h(0)}{x}\right)^2.\] This is a polynomial,
 because $h(x) - h(0)$ is a non-zero polynomial with constant term $0$, so it is divisible by $x$.
 Additionally, $\psi(f)(x)$ has degree $n-2$ because it is the quotient of a
 degree $n$ polynomial and $x^2$.

Given an $r(x) \in \F_q[x]$ which is monic and degree $n-2$, there is a unique factorization of it as $r(x) = g(x)\left(h(x)\right)^2$ where $g(x), h(x) \in \F_q[x]$ are monic (and if $h(x)$ is constant then $h(x) = 1$) and $g(x)$ is square-free. Then $\psi(f)(x) = r(x)$ if and only if $f(x) = g(x)\left(xh(x) + c\right)^2$ for some $c\in \F_q$.

Since we have that $\psi$ is $q$-to-$1$, and there are $q^{n-2}$ monic degree $n-2$ polynomials in $\F_q[x]$, there must be $q*q^{n-2} = q^{n-1}$ monic degree $n$ polynomials with repeated roots in $\F_q[x]$. Thus the proportion of these to all monic degree $n$ polynomials in $\F_q[x]$ is \[\frac{q^{n-1}}{q^n} = \inv{q}.\] So the probability a random, monic, degree $n$ polynomial in $\F_q[x]$ does not have repeated roots (and hence is square-free) is \[1 - \inv{q}.
\]
\end{proof}

The following result is well known \cite[p. 28]{Stanley:1997}:

\begin{lemma}\label{d:cycprob}
Given a fixed positive integer $n,$ and splitting type $\ssy \in \tnss$, the
probability of a uniformly drawn element of $S_n$ falling in conjugacy class $C_{\ssy}$
is
\[\nssy := \frac{C_{\ssy}}{n!} = \prod_{i=1}^n\frac{i^{-c_i(\ssy)}}{c_i(\ssy)!}.\]
\end{lemma}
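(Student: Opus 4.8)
The plan is to compute $|C_\ssy|$ directly and then divide by $n! = |S_n|$. Since conjugation in $S_n$ preserves cycle type and any two permutations of the same cycle type are conjugate, the class $C_\ssy$ consists of exactly those $\sigma \in S_n$ whose disjoint-cycle decomposition has, for each $i$, precisely $c_i := c_i(\ssy)$ cycles of length $i$; here the constraint $\sum_i i\, c_i = n$ holds because $\ssy \in \tnss$ encodes a partition of $n$.

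First I would count the number of ways to build such a $\sigma$ by a ``fill the slots'' argument. Lay out the $n$ symbols $1, \ldots, n$ in a row in one of $n!$ orders, and read them off left to right into the prescribed cycle template: group them into $c_1$ cycles of length $1$, then $c_2$ cycles of length $2$, and so on. Every element of $C_\ssy$ arises this way, so the resulting map from the $n!$ orderings onto $C_\ssy$ is surjective; it remains only to determine its multiplicity.

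The key step --- and the main place to be careful --- is identifying the two independent sources of overcounting. For each individual cycle of length $i$, its $i$ distinct cyclic starting points produce the same cyclic permutation, contributing a factor $i$; across all cycles this gives $\prod_i i^{c_i}$. Separately, within each block of $c_i$ cycles that share the common length $i$, reassigning those cycles among the $c_i$ slots of that length (there are $c_i!$ such reassignments) leaves $\sigma$ unchanged, giving a further factor $\prod_i c_i!$. Since the choice of starting point for each cycle and the assignment of equal-length cycles to slots are genuinely independent and account for all coincidences, each element of $C_\ssy$ is produced exactly $\prod_i i^{c_i} c_i!$ times. Hence
\[
|C_\ssy| = \frac{n!}{\prod_{i=1}^n i^{c_i(\ssy)}\, c_i(\ssy)!},
\]
and dividing by $n!$ yields the claimed formula $\nssy = \prod_{i=1}^n \frac{i^{-c_i(\ssy)}}{c_i(\ssy)!}$.

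As an alternative to the bijective count, I could instead invoke the orbit--stabilizer theorem for the conjugation action of $S_n$ on itself: $C_\ssy$ is the orbit of any fixed $\sigma$ of this cycle type, so $|C_\ssy| = |S_n| / |Z(\sigma)|$, where $Z(\sigma)$ is the centralizer. Identifying $Z(\sigma)$ with the wreath-product structure $\prod_i (\Z/i\Z) \wr S_{c_i}$ gives $|Z(\sigma)| = \prod_i i^{c_i} c_i!$ and the same conclusion. I expect the direct counting to be the cleanest to present, the only delicate point being the verification that the two overcounting factors are independent and exhaustive.
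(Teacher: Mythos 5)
Your proof is correct, but there is nothing in the paper to compare it against: the paper states this lemma as well known and simply cites Stanley's \emph{Enumerative Combinatorics}, Vol.~1, p.~28, giving no argument of its own. So what you have done is supply the standard proof that the paper delegates to the literature. Your ``fill the slots'' count is the classical one, and the delicate point you flag --- that the two overcounting factors are independent and exhaustive --- is indeed airtight, and can be made fully explicit by parametrizing the fiber: fix $\sigma \in C_{\ssy}$; an ordering of $1,\ldots,n$ that reads off to $\sigma$ is precisely the data of, for each $i$, a bijection between the $c_i(\ssy)$ length-$i$ slots of the template and the $c_i(\ssy)$ length-$i$ cycles of $\sigma$ (giving $c_i(\ssy)!$ choices), together with a choice of starting element in each cycle (giving $i$ choices per length-$i$ cycle). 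Distinct choices yield distinct orderings and every ordering in the fiber arises from exactly one such choice, so each fiber has size exactly $\prod_{i=1}^n i^{c_i(\ssy)}\, c_i(\ssy)!$, which is the claimed multiplicity; dividing $n!$ by it gives $|C_{\ssy}|$ and hence the formula for $\nssy$. Your alternative via orbit--stabilizer, identifying the centralizer of $\sigma$ as having order $\prod_i i^{c_i(\ssy)} c_i(\ssy)!$, is equally standard and equally valid --- it is in fact the same computation viewed group-theoretically, since the centralizer acts simply transitively on the fiber just described. Either version would serve as a complete proof of the lemma.
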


We now compute the probability that a random, monic, degree $n$ polynomial in $\F_q[x]$ is square-free and has prescribed splitting type.  Recall that for any prime power $q$ there are
\[
\niq =\inv{i}\sum_{d|i}\mu(i/d)q^d
\]
monic, irreducible polynomials in $\F_q[x]$ of any degree $i \ge 1$ \cite[p. 13]{Rosen:2002}.
In what follows we
 adopt the convention that ${A\choose 0} = 1$ for any non-negative integer $A$, and ${A\choose B} = 0$ whenever $B > A$.


\begin{lemma}\label{l:factortype}
Fix a positive integer $n \ge 2$, a prime power $q=p^f$, and a splitting type $\ssy\in \tnss$. Let  $Q_{n,q}(\ssy)$ be the number of monic, square-free, degree $n$ polynomials in $\F_q[x]$ which have
 $\ssy$ as their splitting type. Then
  \[
  Q_{n,q}(\ssy) = \prod_{i=1}^n{\niq \choose c_i(\ssy)}.
  \]
Additionally,
if $n \ge 3$ and  $q > n^2$, then
\[
\abs{\inv{q^{n}}{Q}_{n,q}(\ssy) - \nssy} <  \frac{n^2}{q-n^2}\nssy.\]
If $n = 2$ and $q > n^2 + n$, then
\[\abs{\inv{q^{n}}{Q}_{n,q}(\ssy) - \nssy} < \frac{n^2+n}{q-(n^2 + n)} \nssy.\]

\end{lemma}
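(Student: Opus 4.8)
The plan is to prove the two assertions in turn: the exact formula for $Q_{n,q}(\ssy)$ is a counting argument, while the approximation to $\nssy$ reduces to bounding a product of factors close to $1$ and invoking Lemma \ref{l:boundprod}. For the exact count I would argue from unique factorization in $\F_q[x]$. A monic square-free polynomial of splitting type $\ssy$ is exactly a product of distinct monic irreducibles in which, for each $i$, precisely $c_i(\ssy)$ of the factors have degree $i$. Since there are $\niq$ monic irreducibles of degree $i$ and factorization into monic irreducibles is unique, such a polynomial is determined by, and determines, an independent choice for each $i$ of an unordered set of $c_i(\ssy)$ distinct degree-$i$ irreducibles among the $\niq$ available. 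There are $\binom{\niq}{c_i(\ssy)}$ such choices, the resulting product automatically has degree $\sum_i i\, c_i(\ssy) = n$ and splitting type $\ssy$, and distinct choices give distinct polynomials. Multiplying over $i$ gives $Q_{n,q}(\ssy) = \prod_{i=1}^n \binom{\niq}{c_i(\ssy)}$; the stated conventions for $\binom{A}{B}$ make this correct even when some degree has too few irreducibles, in which case no polynomial of type $\ssy$ exists and both sides vanish.

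For the comparison, write $c_i = c_i(\ssy)$ and first record the identity, using $q^n = \prod_i q^{i c_i}$ (valid since $\sum_i i c_i = n$) together with the formula $\nssy = \prod_i \frac{i^{-c_i}}{c_i!}$ from Lemma \ref{d:cycprob},
\[
\frac{\inv{q^n}Q_{n,q}(\ssy)}{\nssy} = \prod_{i=1}^n\ \prod_{j=0}^{c_i-1}\frac{i\,(\niq - j)}{q^i}.
\]
This is a product of $N := \sum_i c_i \le n$ factors, each of the form $1 + x_{i,j}$ with $x_{i,j} := i(\niq-j)/q^i - 1$. The idea is to bound $\max_{i,j}\abs{x_{i,j}}$ and then apply Lemma \ref{l:boundprod}: if $\max\abs{x_{i,j}} \le 1/r$ with $r > N$, then the product differs from $1$ by less than $N/(r-N) \le n/(r-n)$, and multiplying back by $\nssy$ gives the desired estimate.

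To bound the factors I would use the explicit formula $i\,\niq = \sum_{d\mid i}\mu(i/d)q^d$, so that $\abs{i\,\niq - q^i} \le \sum_{d\mid i,\,d<i}q^d$; since every proper divisor of $i$ is at most $i/2$, this tail is $O(q^{i/2})$, contributing at most $\inv{q}$ to $\abs{x_{i,j}}$ after division by $q^i$ (the worst case being $i=2$). Combining this with $\frac{ij}{q^i} \le \frac{i(c_i-1)}{q^i} \le \frac{n-i}{q^i}$ — which uses the crucial constraint $i\,c_i \le n$ — gives, for $n \ge 3$, the uniform bound $\max_{i,j}\abs{x_{i,j}} \le n/q$. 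Taking $r = q/n$, the hypothesis $q > n^2$ guarantees $r > n \ge N$, and Lemma \ref{l:boundprod} yields $\abs{\prod(1+x_{i,j}) - 1} < n/(r-n) = n^2/(q-n^2)$, which is the claim.

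The only genuine subtlety is where the effort should go, and how $n=2$ is handled separately. For $n=2$ the sole splitting types are $(1\,1)$ and $(2)$, and I would simply absorb the $\inv{q}$ coming from $M(q;2)$ together with the shift term into the cruder uniform bound $\max\abs{x_{i,j}} \le (n+1)/q$; running Lemma \ref{l:boundprod} with $r = q/(n+1)$ under the hypothesis $q > n^2+n$ then produces the recorded constant $(n^2+n)/(q-(n^2+n))$. I expect the real work to lie entirely in the factor estimate of the previous paragraph — checking that the Möbius-sum tail and the index shift $j$ together remain below the target $O(1/q)$ threshold uniformly in $i$ — since the combinatorial count and the final appeal to Lemma \ref{l:boundprod} are then mechanical.
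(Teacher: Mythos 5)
Your proposal is correct and takes essentially the same approach as the paper's proof: the identical combinatorial count $Q_{n,q}(\ssy)=\prod_{i=1}^n{\niq\choose c_i(\ssy)}$, the same rewriting of $\inv{q^{n}}Q_{n,q}(\ssy)$ against $\nssy$ (via Lemma \ref{d:cycprob}) as a product of factors $1+x_{i,j}$ with $\abs{x_{i,j}}$ controlled by the M\"obius-sum tail plus the shift term $ij/q^i \le (n-i)/q^i$, followed by Lemma \ref{l:boundprod} with $r=q/n$ for $n\ge 3$ and $r=q/(n+1)$ for $n=2$. The only deviations are cosmetic: you bound the tail by $\inv{q}$ where the paper uses $\inv{q-1}$, and you phrase the comparison as a ratio, but both lead to the same constants by the same mechanism.
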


\begin{proof}
Since $Q_{n,q}(\ssy)$ counts the number of ways of choosing combinations of distinct irreducible polynomials of prescribed degrees, we have that 
\begin{equation}\label{e:produ}Q_{n,q}(\ssy)=\prod_{i=1}^n{\niq\choose c_i(\ssy)}.\end{equation}

Note that
\[\frac{\niq}{q^i} = \inv{i}(1 + \varepsilon(i,q))
\] with
\[\abs{\varepsilon(i,q)} \le \begin{cases} 0 \mbox{ if } i = 1\\ \inv{q} + \inv{q^2} + \cdots \inv{q^i} < \inv{q-1} \mbox{ if }i \ge 2.\end{cases}
\]
This is because for $i \ge 2$
 \begin{align*}
 \abs{\varepsilon(i,q)} =& \bigg|\inv{q^i}\sum_{\substack{d<i\\ d|i}}\mu(m/d)q^d\bigg|\\
\le& \inv{q^i}\sum_{d=1}^{i-1} q^d = \inv{q} + \inv{q^2} + \cdots + \inv{q^i}.
\end{align*}

With this, we can rewrite equation (\ref{e:produ}) as
\[\frac{Q_{n,q}(\ssy)}{q^n}
= \prod_{i=1}^n\frac{1}{c_i(\ssy)!}\prod_{\ell=0}^{c_i(\ssy)-1}\frac{(\niq - \ell)}{q^i}
= \prod_{i=1}^n\frac{i^{-c_i(\ssy)}}{c_i(\ssy)!}\prod_{\ell=0}^{c_i(\ssy)-1}\left(1 + \varepsilon(i,q) - \frac{i\ell}{q^i}\right).\]

Note that since the sum $\sum_{i=1}^n ic_i(\ssy) = n$ we have for all $\ell$ with
$0 \le \ell \le c_i(\ssy) - 1$ that $i\ell < n$.
Also note that for $0\le \ell \le c_i(\ssy) -1$ we can write
 \begin{equation}\label{e:combi}\frac{\niq-\ell}{q^i} =
 \inv{i}(1 + \tilde{\varepsilon}_\ell(i,q))
 \end{equation}
with, for $i \ge 2$ and $q\ge n \ge 2$
\[\abs{\tilde{\varepsilon}_\ell(i,q)} \le \abs{\varepsilon(i,q)} + \frac{i\ell}{q^i} < \inv{q-1} + \frac{n}{q^2} \le \inv{q-1} + \inv{q} \le \frac{2}{q} + \frac{1}{q(q-1)}, \] and if $i = 1$ then \[\abs{\tilde{\varepsilon}_\ell(i,q)} = \frac{\ell}{q} \le \frac{n}{q}.\]
Thus by Lemma \ref{d:cycprob}
\begin{equation}
\abs{\inv{q^{n}}Q_{n,q}(\ssy) - \nssy} = \nssy\abs{\prod_{i=1}^{n}\prod_{\ell=0}^{c_i(\ssy)-1}(1 +  \tilde{\varepsilon}_\ell(i,q))-1}.
\end{equation}
When $n \ge 3$ all the terms $\tilde{\varepsilon}_\ell(i,q)$
have absolute value less than or equal to $\frac{n}{q}$.
When $n = 2$ all the $\tilde{\varepsilon}_\ell(i,q)$ have absolute value less than or equal to $\frac{n+1}{q}$. Thus when $n \ge 3$ and $q \ge n^2$ we can apply
 Lemma \ref{l:boundprod} with $r = \frac{q}{n}\ge n$
 and conclude that
 \[
 \abs{\inv{q^{n}}Q_{n,q}(\ssy) - \nssy } < \nssy \frac{n^2}{q - n^2}.
 \]
  When $n = 2$ and $q \ge n^2 + n$ we can apply
  Lemma \ref{l:boundprod} with $r = \frac{q}{n + 1} \ge n$
   and conclude that
   \[
   \abs{\inv{q^{n}} Q_{n,q}(\ssy) - \nssy} < \nssy \frac{n^2 + n}{q - (n^2 + n)}.
   \]
\end{proof}

We now have the necessary tools to prove Theorem \ref{t:uncond} through
Theorem  \ref{c:uncondcor}.


\begin{proof}[of Theorem~\ref{t:uncond}]
By Lemma \ref{l:haar} we have over $K_{\fp}$ that the modulo $\fp$ reduction of a degree $n$, monic, 
$\fp$-adic, random polynomial is equidistributed among the 
monic, degree $n$ polynomials in $\F_q[x]$, with $q= p^{f_{\fp}}$.
 Thus $\monek(\pgpsk)$ is equal to the probability a random monic, degree $n$ polynomial in 
 $\F_q[x]$ is square-free. By Lemma \ref{l:sqfree} this probability is $1 - \inv{q}$, thus
\[\monek(\pgpsk) = 1-\inv{q}.\]

Hensel's lemma \cite[p. 129]{Neukirch:1999} implies that if $f(x) \in \sO_{\fp}[x]$ is monic and has modulo $\fp$ reduction 
$\overline{f}(x)$ which is square-free, then the splitting types
$\ssy(\overline{f})$ and $\ssy(f)$ are equal.
Given any splitting type $\ssy \in \tnss$, Lemma \ref{l:factortype} shows that
for any prime power $q=p^f$, the probability that $\overline{f}$ is square-free and
has splitting type $\ssy$  equal to
$\inv{q^n}\prod_{i=1}^n{\niq \choose c_i(\ssy)}.$
This is $\monek(\pgpsk(\ssy))$. \end{proof}


\begin{proof}[of Theorem~\ref{t:uncond2}]
  Lemma \ref{l:factortype} shows that
 \begin{equation}\label{e:thm1.2}
\abs{\inv{q^{n}}
 \prod_{i=1}^n{\niq \choose c_i(\ssy)} - \nssy} <
 \begin{cases}\displaystyle\frac{n^2+ n}{q -(n^2 + n)}\,\nssy ~~\mbox{ if } n = 2 \mbox{ and }
 pq\ge n^2 + n, \\
 \displaystyle\frac{n^2 }{q- n^2} \, \nssy ~~\mbox{ if } n \ge 3 \mbox{ and } q \ge n^2.\end{cases}
  \end{equation}
  Substituting in $\monek(\pgpsk(\ssy)) = \inv{q^{n}}\prod_{i=1}^n{\niq \choose c_i(\ssy)}$
  from Theorem \ref{t:uncond} completes the proof.
\end{proof}


\begin{proof}[of Theorem~\ref{t:uncond2cor}]
As $q=p^f$ grows without bound both terms on the right side of equation (\ref{e:thm1.2}) goes to zero on the order of 
$\inv{q}$ and so $\monek(\mpistk) \rightarrow \nssy$ as $q\rightarrow \infty$.
\end{proof}


\begin{proof}[of Theorem~\ref{c:uncond}]
Let $f(x)$ be a monic, degree $n$ polynomial in $\sO_{\fp}[x]$ with square-free reduction
modulo $\fp$. Then the Galois group of the splitting field of $f(x)$ over $K_{\fp}$ is cyclic. Furthermore, if $\ssy$
is the splitting type of $f(x)$, its Galois group is generated by an element of
$C_\ssy \subset S_n$. Therefore, the Galois group of $f(x)$ is conjugate to a given
cyclic subgroup $G \subset S_n$ if and only if the splitting type of $f(x)$ is equal to $\ssy_G$.

So for a cyclic group $G \subset S_n$, \[\monek(\pgpsk(G)) = \monek(\pgpsk(\ssy_G)).\]

If $n \ge 3$, and $q > n^2$, we have by an application of Theorem \ref{t:uncond} and the definition of $\tv_n(G)$ that
\begin{align}\label{a:uncond3}
\abs{\monek(\pgpsk(G)) - \tv_n(G)} &=
 \abs{\monek(\pgpsk(\ssy_G)) - \nu_n(\ssy_G)}\\ &< \frac{n^2}{q -  n^2}\nu_n(\ssy_G)
 = \frac{n^2}{q -  n^2} \tv_n(G).\end{align}

If $n = 2$ and $q > n^2 + n$ we instead have that
\begin{equation}\label{e:uncond2}
\abs{\monek(\pgpsk(G)) - \tv_n(G)} < \frac{n^2 +  n}{ q -( n^2 + n)} \tv_n(G).\end{equation}

We also showed in Theorem \ref{t:uncond}
that the measure of all degree $n$, monic polynomials in $\sO_{\fp}[x]$
 with discriminant divisible by $\fp$ is equal to $\inv{q}$. Thus the probability that a polynomial has cyclic Galois group and has 
 discriminant divisible by $\fp$ is at most $\inv{q}$.
 Combining this with equations (\ref{a:uncond3}), and (\ref{e:uncond2})
  by the triangle inequality we have that
\[
\abs{\monek(\pgpk(G)) - \tv_n(G)} <
\begin{cases} \displaystyle
\frac{ n^2 + n}{q-( n^2 + n)}\, \tv_{n}(G) + \inv{q} \mbox{ if } n = 2 \mbox{ and }q > n^2 + n,\\ \displaystyle\frac{n^2}{q- n^2}\,\tv_{n}(G) + \inv{q} \mbox{ if } n \ge 3 \mbox{ and } q > n^2. \end{cases}\]

Finally, the only way a monic polynomial $f(x) \in \sO_{\fp}[x]$ can have non-cyclic
Galois group is if its modulo $\fp$ reduction is not square-free.
By Lemma \ref{l:sqfree} the probability of this occurring is at most
$\inv{q}.$ Thus $\sum_{G \mbox{ non-cyclic}}\monek(\pgpk(G)) \le \inv{q}.$
\end{proof}


\begin{proof}[of Theorem~\ref{c:uncondcor}]
This is immediate from the bounds obtained  in  Theorem \ref{t:uncond}.
\end{proof}
%
%
%

\section{Low Degree Cases}\label{s:examp}

We obtain exact formulas for degrees $n=2$ and $n=3$ for general $p$-adic fields  $K_{\fp}$, when $p>n$.
We compute $\monek(P_{n,\fp}(\cdot))$ for all splitting types
and all Galois groups, noting that  exact  formulas for $\monek(P_{n,\fp}^{\ast}(\cdot))$ were given
in Theorem ~\ref{t:uncond}. The restriction that $p> n$ guarantees that  no splitting field of a 
degree $n$ polynomial
can be wildly ramified. The case $n=2$ was previously analyzed for $\Q_p$ in
the 1999 Ph.D. thesis of Limmer \cite[Corollary, p. 27]{Limmer:1999}. The proof given here is based on
similar ideas but differs in some details.

%
%
%

\subsection{Degree $n=2$} \label{sec41}

\begin{theorem}\label{t:examp2}
Let $p > 2$ be any prime, and let $K_{\fp}$ be any finite extension of $\Q_p$ with residue field $\F_q$. Then 
for monic quadratic polynomials with $\sO_{\fp}$ coefficients we have 
\begin{align*}
\moneTwok(P_{2,\fp}(\text{id})) =\moneTwok(P_{2,\fp}(1,\ 1))=& \inv{2} - \inv{2q+2}\\
\moneTwok(P_{2,\fp}(\Z/2\Z)) =\moneTwok(P_{2,\fp}(2))=& \inv{2} + \inv{2q+2}. 
\end{align*}
\end{theorem}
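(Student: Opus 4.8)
The plan is to reduce to depressed quadratics and then to an explicit Haar-measure computation for the set of squares in $\sO_\fp$. Since $p > 2$ we have $p \nmid n = 2$, so Lemma \ref{l:transf} applies: completing the square replaces $f(x) = x^2 + a_1 x + a_0$ by $f\left(x - \frac{a_1}{2}\right) = x^2 + A_0$, where $A_0 = a_0 - \frac{a_1^2}{4}$ is distributed according to the Haar measure on $\sO_\fp$, and this transformation preserves both the splitting type and the Galois group. Thus it suffices to compute, for $A_0$ Haar-distributed on $\sO_\fp$, the probability that $x^2 + A_0$ splits into two distinct linear factors versus the probability that it is irreducible.

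Next I would characterize these two events arithmetically. The polynomial $x^2 + A_0 = x^2 - (-A_0)$ factors over $K_\fp$ exactly when $-A_0$ is a square in $K_\fp$; when $-A_0$ is a nonzero square the two roots $\pm\sqrt{-A_0}$ are distinct (here is where I use $p \neq 2$), giving splitting type $(1,1)$ and trivial Galois group, while if $-A_0$ is a nonzero non-square the polynomial is irreducible with Galois group $\Z/2\Z$. The remaining case $A_0 = 0$ gives a repeated root but occurs on a set of Haar measure zero, so it does not affect the probabilities. Since negation is a Haar-measure-preserving bijection of $\sO_\fp$, and for $u \in \sO_\fp$ a square root in $K_\fp$ automatically lies in $\sO_\fp$, the probability of the split case equals the Haar measure of the set $S = \{u \in \sO_\fp : u \text{ is a square in } \sO_\fp\}$.

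It then remains to compute $\haark(S)$. I would stratify $S$ by valuation: writing $u = \pi^v w$ with $w \in \sO_\fp^\times$, the element $u$ is a square iff $v$ is even and $w$ is a square unit. Because $p$ is odd, Hensel's lemma shows a unit $w$ is a square in $\sO_\fp$ iff its reduction $\bar w$ lies in the index-two subgroup of squares of $\F_q^\times$; since the reduction map pushes the Haar measure on $\sO_\fp^\times$ to the uniform measure on $\F_q^\times$, exactly half the units are squares. The set of $u$ of valuation $2k$ has measure $q^{-2k}(1 - q^{-1})$, so summing the resulting geometric series gives
\[
\haark(S) = \frac{1}{2}\left(1 - q^{-1}\right)\sum_{k=0}^{\infty} q^{-2k} = \frac{1}{2}\cdot\frac{1 - q^{-1}}{1 - q^{-2}} = \frac{q}{2q+2} = \inv{2} - \inv{2q+2}.
\]

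This yields $\moneTwok(\pgpTwok(\mathrm{id})) = \moneTwok(\pgpTwok(1,\ 1)) = \inv{2} - \inv{2q+2}$, and the complementary (irreducible) probability is $1 - \haark(S) = \inv{2} + \inv{2q+2}$, as claimed. The main technical point to get right is the verification that the hypothesis $p > 2$ legitimately passes the square-root question down to the residue field: namely the odd-residue-characteristic Hensel lift of a square root of a unit and the distinctness of the two roots $\pm\sqrt{-A_0}$. Once the valuation stratification and the one-half density of square units are in place, the geometric-series evaluation is routine.
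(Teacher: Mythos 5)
Your proposal is correct and takes essentially the same approach as the paper: both reduce via Lemma \ref{l:transf} to $x^2 + A_0$ with $A_0$ Haar-distributed, and both ultimately evaluate the same geometric series over even valuations using the fact (via Hensel's lemma, valid since $p>2$) that exactly half the units reduce to squares in $\F_q^\times$. The only difference is organizational: the paper packages the series through the rescaling $g(x) = \pi_\fp^{-2r} f(\pi_\fp^r x)$ and a conditional probability given $A_0 \notin \pi_\fp^2 \sO_\fp$, handling the odd-valuation case by a ramification argument, whereas you stratify the set of squares directly by valuation, which is equivalent.
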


\begin{proof}

By Lemma \ref{l:transf} we can reduce to studying random polynomials of the form $f(x) = x^2 + A \in \sO_\fp[x]$.
 If $2r$ is the largest even power of $\pi_\fp$ which divides $A$, which is a $q^{-2r}-q^{-2r-2}$ event, 
 then $g(x) :=\pi_\fp^{-2r}{f\left(\pi_\fp^rx\right)}$ is a polynomial in $\sO_\fp[x]$. 
 Furthermore, $g(x)$ and $f(x)$ have the same splitting type and Galois group.
  The coefficients of $g(x) = x^2 + B$ are distributed with respect to the Haar measure 
  conditional on the fact that the constant term is not divisible by $\pi_\fp^2$.

 If we let $Y$ denote the conditional probability that $x^2 + B \in \sO_\fp[x]$ factors into two linear polynomials given that 
 $B \notin \pi_\fp^2\sO_p$, then we have the following equality:\[\moneTwok(P_{2,\fp}(1,\ 1))=\moneTwok(P_{2,\fp}(\text{id})) = Y\sum_{i=0}^\infty\left(\inv{q^{2i}} - \inv{q^{2i+2}}\right) = Y.\]

We then have to study the distribution on Galois groups induced by the random polynomial $x^2 + B\in\sO_\fp[x]$, when 
$B\notin \pi_\fp^2\sO_{\fp}$. Since we are studying the conditional probability, we will only be calculating measure of subsets of 
$\sO_\fp \smallsetminus \pi_\fp^2\sO_\fp$, and so will need to divide our measures by $1 - q^{-2}$ to obtain probabilities. 
If $B \notin \pi_\fp\sO_\fp$, which is a probability $1 - \inv{q}$ event, then the Galois group is trivial (and has splitting type $(1,\ 1)$)
 if $B$ is a quadratic residue modulo $(\fp)$, and it is $\Z/2\Z$ (with splitting type $(2)$) otherwise; both of these are equally likely events. If $B \in \pi_\fp\sO_\fp \smallsetminus \pi_\fp^2\sO_\fp$, 
 which is a probability $\inv{q} - \inv{q^2}$ event, then the extension $K_f$ must be ramified hence the Galois group 
 is $\Z/2\Z$. Thus we have that
  \[Y = \frac{\inv{2}\left(1 - \inv{q}\right) + 0\left(\inv{q} - \inv{q^2}\right)}{1 - \inv{q^2}} = 
  \inv{2}\left(\inv{1 + \inv{q}}\right) = \inv{2} - \inv{2(q+1)}.
  \]
\end{proof}

We also wish to analyze the case where the splitting field of a polynomial is unramified over $K_\fp$.
The splitting field can sometimes be unramified,
even when $\pi_\fp$ divides the discriminant of the polynomial.
Let $R_{2,\fp}$ be the set of coefficients  $(a_2, a_1, a_0)$ of monic quadratic polynomials in $\sO_\fp[x]$
whose splitting fields are unramified over $K_\fp$.
For any $\ssy \in \tnss$, let $R_{2,\fp}(\ssy)$ be the subset of
$R_{2,\fp}$ associated to polynomials with splitting type $\ssy$.

\begin{theorem}\label{t:unramifiedext2}
Fix a prime $p > 2$ and let $K_{\fp}$ be any finite extension of $\Q_p$ with residue field $\F_q$. Then
 \[
 \moneTwok(R_{2,\fp}) = 1- \displaystyle\frac{1}{q+1}.
 \]
Here
\begin{align*}
\moneTwok(R_{2,\fp}(1,\ 1)) =& \inv{2} - \inv{2q+2}\\
\moneTwok(R_{2,\fp}(2)) =&  \inv{2} - \inv{2q+2}.\\
\end{align*}
\end{theorem}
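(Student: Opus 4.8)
The plan is to reduce to depressed quadratics and then read off the answer from the structure of quadratic extensions of $K_{\fp}$ together with the computation already carried out in Theorem~\ref{t:examp2}. First I would apply Lemma~\ref{l:transf} (valid since $p>2$, so $p\nmid 2$) to replace a random monic quadratic by $f(x)=x^2+A$ with $A$ distributed according to the Haar measure on $\sO_{\fp}$; because the substitution $x\mapsto x-a_1/2$ is a $K_{\fp}$-linear change of variable, it leaves the splitting field $K_f=K_{\fp}(\sqrt{-A})$ unchanged polynomial-by-polynomial, hence preserves both the splitting type and the ramification of $K_f/K_{\fp}$, so the event ``$K_f/K_{\fp}$ unramified'' has the same probability in the two models. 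The first easy observation is that splitting type $(1,\ 1)$ occurs exactly when $-A$ is a square in $K_{\fp}$, in which case $K_f=K_{\fp}$ is trivially unramified; therefore $R_{2,\fp}(1,\ 1)$ coincides with $P_{2,\fp}(1,\ 1)$, and Theorem~\ref{t:examp2} already gives $\moneTwok(R_{2,\fp}(1,\ 1))=\inv{2}-\inv{2q+2}$.

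The crux is to determine when $K_{\fp}(\sqrt{-A})$ is unramified in the remaining irreducible case, splitting type $(2)$. Here I would invoke the classification of quadratic extensions of a $p$-adic field with $p$ odd. Writing $A=\pi_{\fp}^{m}\epsilon$ with $\epsilon\in\sO_{\fp}^{\times}$, one has $K_{\fp}(\sqrt{-A})=K_{\fp}(\sqrt{-\epsilon})$ when $m$ is even, and this equals either $K_{\fp}$ or the unramified quadratic extension according as $-\epsilon$ is or is not a square unit: since $p>2$, a unit is a square in $\sO_{\fp}^{\times}$ iff its reduction modulo $\fp$ is a square in $\F_q^{\times}$ (Hensel's lemma), and adjoining the square root of a non-square unit produces the residue extension $\F_{q^2}$, which is unramified. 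When $m$ is odd, $K_{\fp}(\sqrt{-A})=K_{\fp}(\sqrt{-\pi_{\fp}\epsilon})$ is ramified. Thus $K_f/K_{\fp}$ is unramified precisely when $v_{\fp}(A)\in 2\Z$, and this parity criterion is the main step.

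With this characterization the measures are a short computation. The event $\{v_{\fp}(A)\text{ even}\}$ has probability
\[
\sum_{r\ge 0}\bigl(q^{-2r}-q^{-2r-1}\bigr)=\left(1-\inv{q}\right)\frac{1}{1-q^{-2}}=\frac{q}{q+1}=1-\frac{1}{q+1},
\]
which is $\moneTwok(R_{2,\fp})$. To split this mass between the two splitting types I would condition on $v_{\fp}(A)=2r$: the unit $\epsilon$ is then Haar-distributed on $\sO_{\fp}^{\times}$, so $-\epsilon$ is too, and since the squares form an index-$2$ subgroup of $\sO_{\fp}^{\times}$ (for $p$ odd), the probability that $-\epsilon$ is a square is exactly $\inv{2}$. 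Hence each splitting type captures half of the even-valuation mass, giving $\moneTwok(R_{2,\fp}(1,\ 1))=\moneTwok(R_{2,\fp}(2))=\inv{2}\left(1-\frac{1}{q+1}\right)=\inv{2}-\inv{2q+2}$; this agrees with the value for $R_{2,\fp}(1,\ 1)$ found above and, upon summing, with $\moneTwok(R_{2,\fp})$.

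The main obstacle is the local-field input of the second paragraph: correctly classifying the extensions $K_{\fp}(\sqrt{-A})$ by ramification and pinning down the Hensel lifting of squares, both of which genuinely use $p>2$. Once the parity criterion $v_{\fp}(A)\in 2\Z$ is in hand, the remaining steps are a geometric series and the coset symmetry of squares in $\sO_{\fp}^{\times}$, both routine.
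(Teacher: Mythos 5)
Your proof is correct and takes essentially the same route as the paper's: both reduce via Lemma~\ref{l:transf} to $x^2+A$ with $A$ Haar-distributed, characterize the unramified locus as $\{\operatorname{ord}_{\pi_\fp}(A) \text{ even}\}$, sum a geometric series to get $1-\frac{1}{q+1}$, and split that mass equally between the two splitting types using the index-$2$ quadratic-residue symmetry in $\sO_\fp^{\times}$. The only cosmetic difference is that the paper runs the computation through the conditioning on $B\notin\pi_\fp^2\sO_\fp$ inherited from Theorem~\ref{t:examp2}, whereas you sum over all even valuations directly and make the parity criterion explicit via the classification of quadratic extensions of $K_\fp$ for $p$ odd.
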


\begin{proof}
As in the previous theorem, we reduce to analyzing polynomials of the form $x^2 +B \in \sO_{\fp}[x]$ with $B \notin \pi_\fp^2\sO_{\fp}$. Whenever $B \notin \pi_\fp\sO_{\fp}$, the modulo $p$ reduction of $x^2 +B$ is a square-free polynomial and the splitting field of
 $x^2 +B$ is therefore unramified over $K_{\fp}$. Furthermore, the probabilities that $x^2 + B$ factors or is irreducible in this case are equal.

The field extension is ramified if and only if  $B$ is exactly divisible by an odd power of $\pi_\fp$.
The measure of the cases where $\pi_\fp^{2k}$ for $k \ge 1$ exactly divide $B$ add up to  $\frac{q^2}{q^2-1}$,  so we conclude that:
\begin{align*}
 &\moneTwok(R_{2,\fp}) = 1- \frac{q^2}{q^2-1}\frac{q-1}{q^2} = 1- \inv{q+1}\\
 &\moneTwok(R_{2,\fp}(1,\ 1)) =
\moneTwok(R_{2,\fp}(2)) =  \inv{2} - \inv{2q+2}.
\end{align*}
\end{proof}

This result shows that the probability $P_{2,\fp}^{unr}$ that the field generated by the roots
is trivial, conditioned on requiring that the field extension be unramified, is, for $p >3$ and any $p$-adic field $K_\fp$,
\[
P_{2, \fp}^{unr}= \frac{ \moneTwok (R_{2, \fp}(1, 1))}{\moneTwok(R_{2, \fp})} =\frac{1}{2}.
\]
This conditional probability is independent of $p$ and $K_\fp$.

%
%
%

\subsection{Degree $n=3$} \label{sec42}

We use the notation $\left(\frac{a}{b}\right)$ to denote the Legendre symbol.

\begin{theorem}\label{t:examp3}
Fix a prime $p > 3$ and let $K_{\fp}$ be any finite extension of $\Q_p$ with residue field $\F_q$.
Then for monic cubic polynomials with $\sO_{\fp}$ coefficients we have
 \begin{align*}
\moneThreek(\pgpThreek(\text{id}))=\moneThreek(\pgpThreek(1,\ 1,\ 1)) =& \inv{6} - \frac{3q^4+q^3+2q^2+2q+1}{6(q+1)(q^4+q^3+q^2+q+1)}\\
\moneThreek(\pgpThreek(\Z/2\Z))=\moneThreek(\pgpThreek(2,\ 1)) =& \inv{2} + \frac{q^4-q^3-1}{2(q+1)(q^4 + q^3 + q^2 +q+1)}\\
\moneThreek(\pgpThreek(3)) =& \inv{3} + \frac{2q^2 -q +2}{3(q^4 + q^3 + q^2 + q + 1)},
\end{align*}
and
\begin{align*}
\moneThreek(\pgpThreek(\Z/3\Z)) =& \inv{3} -\frac{q^2+q+1}{3(q^4 + q^3 + q^2 + q + 1)} + \inv{2}\left(1+\left(\frac{q}{3}\right)\right)\frac{q^2+1}{q^4+q^3+q^2+q+1}\\
\moneThreek(\pgpThreek(S_3)) =&\inv{2}\left(1-\left(\frac{q}{3}\right)\right)\frac{q^2+1}{q^4+q^3+q^2+q+1}.
\end{align*}
\end{theorem}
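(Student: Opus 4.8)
The plan is to reduce to depressed cubics, stratify the coefficient space by $\fp$-adic valuation, and treat the square-free stratum with Theorem~\ref{t:uncond}, the double-root stratum with Hensel's lemma plus the quadratic analysis, and the triple-root stratum with a Newton-polygon classification together with a scaling self-similarity. First I would invoke Lemma~\ref{l:transf} (legitimate since $p>3$, so $p\nmid 3$) to replace $f$ by $x^3+Ax+B$ with $A,B\in\sO_{\fp}$ independent and Haar-distributed, which preserves all splitting types and Galois groups. Setting $\Delta=-4A^3-27B^2$, I split on the reduction $\bar f\in\F_q[x]$. Because $p>3$, a repeated root forces either a double root (reduction $(x-\alpha)^2(x+2\alpha)$ with $\alpha\neq0$, the stratum $D$) or a triple root at $0$, i.e.\ $\fp\mid A$ and $\fp\mid B$ (the stratum $T$); the square-free stratum has measure $1-\inv q$. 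On the square-free stratum the splitting type equals that of $\bar f$, so Theorem~\ref{t:uncond} supplies the contributions $\frac{(q-1)(q-2)}{6q^2}$, $\frac{q-1}{2q}$, $\frac{q^2-1}{3q^2}$ to types $(1,1,1),(2,1),(3)$, and since an irreducible square-free reduction yields the unramified cubic field (whose Galois group is cyclic) this $(3)$-part contributes entirely to $\Z/3\Z$.

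Next I would dispose of $D$, which has measure $\frac{q-1}{q^2}$. Here the simple root lifts by Hensel to a $K_{\fp}$-rational root, so $f=(x-\beta)g(x)$ with $g$ quadratic and $\bar g=(x-\alpha)^2$; hence $D$ feeds only into $(1,1,1)$ and $(2,1)$. Since $g(\beta)$ is a unit, $v_{\fp}(\mathrm{disc}(g))=v_{\fp}(\Delta)$, and $g$ splits over $K_{\fp}$ precisely when $\Delta$ is a square in $K_{\fp}^{\times}$. Because $\partial\Delta/\partial A=-12A^2$ is a unit on $D$, a submersion argument shows $\Delta$ is Haar-distributed on $\fp$ given $D$, so the conditional probability that $g$ splits equals the probability that a Haar element of $\fp$ is a square, namely $\inv{2(q+1)}$ (the same computation behind Theorems~\ref{t:examp2} and~\ref{t:unramifiedext2}). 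Multiplying by the measure of $D$ gives its contributions to $(1,1,1)$ and $(2,1)$.

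The crux is $T$, which I would handle by a scaling recursion plus Newton polygons. The map $(A,B)\mapsto(\pi^{-2}A,\pi^{-3}B)$ carries Haar measure on $S:=\{v_{\fp}(A)\ge2,\ v_{\fp}(B)\ge3\}$ (of measure $q^{-5}$) to Haar measure on $(\sO_{\fp})^2$ and preserves splitting type and Galois group, so each such event $E$ satisfies the exact self-similarity $\moneThreek(E)=\moneThreek(E\cap S^c)/(1-q^{-5})$; this automatically accounts for the deep region (e.g.\ the unramified cubics with non-square-free reduction). It then remains to enumerate the finitely many primitive patterns in $T\cap S^c$. For $x^3+Ax+B$ the polygon on $(0,v_{\fp}(B)),(1,v_{\fp}(A)),(3,0)$ gives a single segment of slope $-v_{\fp}(B)/3$ with $v_{\fp}(B)\in\{1,2\}$—hence a totally ramified \emph{irreducible} cubic—in the patterns $(v_{\fp}(A),v_{\fp}(B))\in\{(1,1),(\ge2,1),(\ge2,2)\}$, and a length-one segment together with a slope $-\tfrac12$ segment—hence type $(2,1)$—in the patterns $(1,2)$ and $(1,\ge3)$; no pattern yields $(1,1,1)$ or an unramified $(3)$. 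Summing measures gives a ramified-cubic mass $q^{-2}-q^{-3}+q^{-4}-q^{-5}=q^{-5}(q-1)(q^2+1)$ and a $(2,1)$-mass $q^{-3}-q^{-4}$. A tamely totally ramified cubic (here $p>3$) is cyclic $\Z/3\Z$ exactly when $\mu_3\subset K_{\fp}$, i.e.\ $3\mid q-1$, i.e.\ $\left(\frac q3\right)=1$ (equivalently $\Delta$ a square), and otherwise is non-Galois with closure $S_3$; so the ramified mass goes to $S_3$ in fraction $\tfrac12\!\left(1-\left(\frac q3\right)\right)$ and to $\Z/3\Z$ in fraction $\tfrac12\!\left(1+\left(\frac q3\right)\right)$.

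Assembling $\moneThreek(E\cap S^c)$ for each $E$ from the three strata and dividing by
\[
1-q^{-5}=q^{-5}(q-1)(q^4+q^3+q^2+q+1)
\]
produces the denominator $q^4+q^3+q^2+q+1$ and, after simplification, the five stated formulas; the identities $\moneThreek(\pgpThreek(\mathrm{id}))=\moneThreek(\pgpThreek(1,1,1))$ and $\moneThreek(\pgpThreek(\Z/2\Z))=\moneThreek(\pgpThreek(2,1))$ follow since $(1,1,1)$ and $(2,1)$ force the trivial and order-two groups. I expect the main obstacle to be the Newton-polygon bookkeeping in $T$—verifying that each single-segment case is genuinely irreducible and totally ramified with no hidden further splitting—together with the clean justification that tame ramified cubics are uniformly $\Z/3\Z$ or $S_3$ according to $q\bmod 3$ and that this coincides with the square/non-square dichotomy of $\Delta$; the measure-preservation in the scaling identity and the Haar-on-$\fp$ claim in $D$ are the remaining points demanding care.
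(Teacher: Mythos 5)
Your proposal is correct, reproduces all five stated formulas (I checked the assembled totals against the theorem, e.g.\ at $q=5$ and $q=7$ they agree exactly), and shares the paper's overall architecture: the reduction to $x^3+Ax+B$ via Lemma~\ref{l:transf}, the scaling self-similarity on $S=\{v_\fp(A)\ge 2,\ v_\fp(B)\ge 3\}$ producing the factor $1-q^{-5}$ (the paper phrases this as a geometric series over $m$ with $\pi_\fp^{2m}\mid A$, $\pi_\fp^{3m}\mid B$), and the Newton-polygon classification of $T\cap S^{c}$ --- your five valuation patterns are precisely the paper's Cases 2, 3 and 4, with the same masses $q^{-3}-q^{-4}$ for type $(2,\ 1)$ and $q^{-2}-q^{-3}+q^{-4}-q^{-5}$ for the totally ramified cubics. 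Where you genuinely depart from the paper is in the two most laborious components. First, on the square-free-reduction stratum the paper counts depressed cubics over $\F_q$ by hand (Cases 1 and 5A--5C, e.g.\ the count $(q-1)(q-5)/6$ of split $x^3+Ax+B$ with $B\not\equiv 0$), whereas you invoke Theorem~\ref{t:uncond} directly; this is legitimate because the translation in Lemma~\ref{l:transf} preserves both the discriminant and the splitting type, so the depressed square-free stratum carries exactly the masses $\frac{(q-1)(q-2)}{6q^2}$, $\frac{q-1}{2q}$, $\frac{q^2-1}{3q^2}$, which indeed equal the paper's summed case totals. Second, for the double-root stratum $D$ the paper proves Lemma~\ref{l:limit} by counting equivalence classes of root triples in $\sO_\fp/(\fp)^n\sO_\fp$ and letting $n\to\infty$; you instead Hensel off the simple root, reduce splitting of the quadratic factor to whether $\Delta$ (equivalently $\mathrm{disc}(g)=\Delta/g(\beta)^2$) is a square, and use the unit derivative $\partial\Delta/\partial A=-12A^2$ (a unit on $D$ since $\bar{A}=-3\alpha^2\neq 0$ and $p>3$) to show $\Delta$ is Haar-distributed on $\fp$, recovering $\inv{2q+2}$ from the quadratic square-probability already behind Theorem~\ref{t:examp2}; this is a cleaner argument for the same constant. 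Your tame-ramification criterion ($\Z/3\Z$ iff $\mu_3\subset K_\fp$ iff $q\equiv 1\pmod 3$) is equivalent to the paper's computation that the unit part of the discriminant is $-27$ times a square. One incidental payoff of your bookkeeping: you correctly assign the patterns $(1,2)$ and $(1,\ge 3)$ the full mass $q^{-3}-q^{-4}$ to type $(2,\ 1)$, which is what the final formulas require; the paper's Case 2 display carries a spurious factor $\inv{2}$ (evidently a typo, since with it the stated value of $\moneThreek(\pgpThreek(2,\ 1))$ would not be reproduced).
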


We also wish to analyze the case where the splitting field of a polynomial is unramified over $K_\fp$. This can occur even 
when $\pi_\fp$ divides the discriminant of the polynomial. Let $R_{3,\fp}$ be the set of coefficients of 
monic cubic polynomials in $\sO_\fp[x]$ whose splitting fields are unramified over $K_\fp$. For any $\ssy \in \tnss$, let $R_{3,\fp}(\ssy)$ be the subset of $R_{3,\fp}$ associated to polynomials with splitting type $\ssy$.


\begin{theorem}\label{t:unramifiedextn3}
Fix a prime $p > 3$ and let $K_{\fp}$ be any finite extension of $\Q_p$ with residue field $\F_q$. Then 
\[\moneThreek(R_{3,\fp}) = 1- \displaystyle\frac{q^2 + q+1}{q^4 + q^3 + q^2 + q + 1}\] and\begin{align*}
\moneThreek(R_{3,\fp}(1,\ 1,\ 1)) =& \inv{6} - \frac{3q^4+q^3+2q^2+2q+1}{6(q+1)(q^4+q^3+q^2+q+1)},\\
\moneThreek(R_{3,\fp}(2,\ 1)) =& \inv{2} + \frac{q^4 -q^3 - 2q^2 -2q -1}{2(q + 1)(q^4 + q^3 + q^2 + q + 1)},\\
\moneThreek(R_{3,\fp}(3))  =& \inv{3} - \frac{q^2 + q + 1}{3(q^4 + q^3 + q^2 + q + 1)}.
\end{align*}
\end{theorem}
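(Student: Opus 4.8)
The plan is to mirror the degree-two argument of Theorem~\ref{t:unramifiedext2}, first reducing to depressed cubics and then organizing the coefficient space by a scaling that produces a geometric series. Since $p>3$ I may invoke Lemma~\ref{l:transf} to replace a general monic cubic by $f(x)=x^3+Ax+B$ with $(A,B)$ Haar-distributed on $\sO_\fp^2$; this preserves both the splitting type and the splitting field, hence its ramification. The structural observation I would exploit is the substitution $x\mapsto\pi_\fp x$: writing $(A,B)=(\pi_\fp^{2k}A',\pi_\fp^{3k}B')$ with $k=\min(\lfloor v(A)/2\rfloor,\lfloor v(B)/3\rfloor)$ puts every pair (up to measure zero) uniquely as $\pi_\fp^{2k},\pi_\fp^{3k}$ times a \emph{primitive} pair with $v(A')\le 1$ or $v(B')\le 2$, and $x^3+Ax+B$, $x^3+A'x+B'$ have the same splitting field. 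As this scaling multiplies Haar measure by $q^{-5k}$, I expect
\[
\moneThreek(R_{3,\fp}(\ssy))=\frac{q^5}{q^5-1}\,\mu^{\mathrm{prim}}(\ssy),
\]
where $\mu^{\mathrm{prim}}(\ssy)$ is the measure of primitive pairs with unramified splitting field of type $\ssy$; the factor $q^5/(q^5-1)=q^5/\big((q-1)(q^4+q^3+q^2+q+1)\big)$ explains the denominator $q^4+q^3+q^2+q+1$ appearing throughout.

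I would then compute $\mu^{\mathrm{prim}}(\ssy)$ according to the reduction $\bar f$. When $v(\mathrm{Disc}(f))=0$ the reduction is separable; by Hensel's lemma, exactly as in the proof of Theorem~\ref{t:uncond}, the splitting field is unramified and the splitting type equals the factorization type of $\bar f$ over $\F_q$. By the translation-invariance behind Lemma~\ref{l:transf}, this ``generic'' count is $\tfrac1q\prod_i\binom{M(q;i)}{c_i(\ssy)}$ depressed cubics, giving generic measures $\tfrac{(q-1)(q-2)}{6q^2}$, $\tfrac{q-1}{2q}$ and $\tfrac{q^2-1}{3q^2}$ for types $(1,1,1)$, $(2,1)$ and $(3)$. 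For type $(1,1,1)$ a completely split cubic has splitting field $K_\fp$, so I expect $R_{3,\fp}(1,1,1)=\pgpThreek(1,1,1)$ exactly, recovering the first formula from Theorem~\ref{t:examp3}. For type $(3)$, an unramified irreducible cubic generates the unique unramified cubic extension; a short Newton-polygon check shows that any primitive pair with $\bar f$ inseparable and $f$ irreducible necessarily has $\bar f=x^3$ and is (tamely, totally) ramified, so only the generic term survives, $\mu^{\mathrm{prim}}(3)=\tfrac{q^2-1}{3q^2}$, which after the scaling factor yields the stated $R_{3,\fp}(3)$.

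The main obstacle is the inseparable locus $v(\mathrm{Disc}(f))\ge 1$ feeding types $(1,1,1)$ and $(2,1)$. There $\bar f=(x-\bar r)^2(x-\bar s)$ with $\bar r\neq\bar s$, forcing $A,B$ to be units; Hensel lifts the simple root to $s\in\sO_\fp$ and factors $f=(x-s)g(x)$ with $g$ monic quadratic and $\bar g=(x-\bar r)^2$, placing us precisely in the setting of Theorem~\ref{t:unramifiedext2} applied to the cofactor. I would parametrize this locus by $(s,w)$ with $s$ a unit and $\mathrm{Disc}(g)=-4w$, $w\in\fp$; the change of variables to $(A,B)$ has unit Jacobian, so $w$ is Haar on $\fp$ and the splitting field is $K_\fp(\sqrt{\mathrm{Disc}(g)})$. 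Sorting by the parity of $v(w)$ and the residue square class of its unit part — even valuation with square unit giving a split (type $(1,1,1)$) unramified cubic, even valuation with non-square unit giving an unramified type $(2,1)$, and odd valuation giving a \emph{ramified} type $(2,1)$ to be excluded — and summing the resulting geometric series in $v(w)$ over the units $s$ produces the degenerate contributions to $\mu^{\mathrm{prim}}(1,1,1)$ and $\mu^{\mathrm{prim}}(2,1)$. Combining these with the generic terms and the scaling factor gives the type formulas, and summing the three types yields $\moneThreek(R_{3,\fp})$ as the complement of the ramified locus. The delicate point throughout is the bookkeeping of this cofactor discriminant: its valuation parity decides ramification while its residue square class decides the split, and I would take particular care that the boundary between unramified and ramified type $(2,1)$ is drawn exactly where $v(\mathrm{Disc}(g))$ changes parity.
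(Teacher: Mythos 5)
Your architecture is the same as the paper's: reduce to depressed cubics via Lemma~\ref{l:transf}, scale $(A,B)\mapsto(\pi_\fp^{2k}A',\pi_\fp^{3k}B')$ to get the geometric factor $q^5/(q^5-1)$, use Hensel on the separable-reduction locus, and analyze the double-root locus through the quadratic cofactor. Your generic measures $\tfrac{(q-1)(q-2)}{6q^2}$, $\tfrac{q-1}{2q}$, $\tfrac{q^2-1}{3q^2}$ agree exactly with the union of the paper's Cases 1 and 5A--5C in the proof of Theorem~\ref{t:examp3} (the paper splits off the $B\in\pi_\fp\sO_\fp$, $A\notin\pi_\fp\sO_\fp$ reductions as a separate case, but the totals coincide), and your $(s,w)$ parametrization is sound: the Jacobian of $(s,t)\mapsto(A,B)$ for $f=(x-s)(x^2+sx+t)$ is $2s^2+t\equiv 9\bar r^2\not\equiv 0\pmod{\fp}$ on the double-root locus, so $w=\Disc(g)$ is indeed Haar on $\fp$; this is a cleaner substitute for the paper's counting Lemma~\ref{l:limit}, and it recovers the split probability $\inv{2q+2}$ that the lemma computes. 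Your formulas for $R_{3,\fp}(1,\,1,\,1)$ and $R_{3,\fp}(3)$ match the theorem.

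However, you should be aware that carrying out your plan faithfully does \emph{not} reproduce the stated formulas for $R_{3,\fp}(2,\,1)$ and $R_{3,\fp}$: your method yields $\moneThreek(R_{3,\fp}(2,\ 1)) = \frac{q^3(q^2+q+1)}{2(q+1)(q^4+q^3+q^2+q+1)}$ and $\moneThreek(R_{3,\fp}) = \frac{q^3(q^2+q+1)}{(q+1)(q^4+q^3+q^2+q+1)}$, each smaller than the stated value by $\frac{q^4}{(q+1)(q^4+q^3+q^2+q+1)}$ --- precisely the measure of the double-root stratum with $v(\Disc(g))$ odd. The discrepancy is on the paper's side: its proof simply re-sums Cases 1 and 5 of Theorem~\ref{t:examp3}, asserting that integral Newton-polygon slopes are equivalent to an unramified splitting field. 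That equivalence fails on the $(1^2,\,1)$-reduction locus (Case 5D), where ramification lives in the cofactor discriminant and is invisible to the Newton polygon of $f$: for instance $f=x^3-3x-2+\pi_\fp$ has unit coefficients, reduction $(x+1)^2(x-2)$, and $\Disc(f)=27\pi_\fp(4-\pi_\fp)$ of odd valuation, so it has splitting type $(2,\,1)$ with \emph{ramified} quadratic splitting field, yet the paper counts it in $R_{3,\fp}(2,\,1)$ (it takes the full $1-\inv{2q+2}$ of that locus rather than the inert fraction $\inv{2q+2}$). Your odd/even bookkeeping is the correct one --- it mirrors the paper's own degree-two treatment in Theorem~\ref{t:unramifiedext2}, and it makes the degenerate contributions to unramified $(1,\,1,\,1)$ and $(2,\,1)$ equal, as in degree two. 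One small completeness point: the inseparable locus feeding type $(2,\,1)$ is not exhausted by double-root reductions; $\bar f=x^3$ with $v(A)=1$ and $v(B)\ge 2$ (the paper's Case 2) also gives reducible $f$ of type $(2,\,1)$, ramified via the slope $-\inv{2}$ segment of the Newton polygon --- harmless, since it is excluded from $R_{3,\fp}$ either way, but your sketch should state the exclusion explicitly rather than fold the whole $\bar f=x^3$ locus into the irreducible case.
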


Note as well that by definition
\[\nu_{3,\fp}(R_{3,\fp})=\moneThreek(R_{3,\fp}(1,\ 1,\ 1)) +\moneThreek(R_{3,\fp}(2,\ 1))+\moneThreek(R_{3,\fp}(3)).\]

This result shows that the probability $P^{unr}_{3,\fp}$ that the field generated by the roots is trivial, conditioned on requiring that the field extension be unramified, is, for $p > 3$ and any $p$-adic field $K_\fp$,
\[P^{unr}_{3,\fp} = \frac{\moneThreek(R_{3,\fp}(1,\ 1,\ 1))}{\moneThreek(R_{3,\fp})} = \frac{q^2 - q + 1}{6(q^2 + 2q + 1)} = \inv{6} - \frac{q}{2(q^2 + 2q + 1)}.\]

In order to prove these results, we will need the following lemma.


\begin{lemma}\label{l:limit}
Fix a prime $p>3$, let $K_{\fp}$ be any finite extension of $\Q_p$ with residue field $\F_q$, and fix
 $a \in \left(\sO_\fp/(\fp)\sO_\fp\right)^*$.
Then for any $n \ge 1$ there are $q^{2n-2}$ polynomials $g(x) = x^3 + Ax + B \in \sO_\fp/(\fp)^n\sO_\fp[x]$ which  satisfy 
${g}(x) \equiv (x+a)^2(x-2a)\mod (\fp)$. If $n$ is even then
\[\frac{q^{n-1}}{2}\left(\frac{q^{n-1}-q}{q+1}\right) + q^{n-1}\] of these polynomials $g(x)$
are expressible as the product of three linear factors, and if $n$ is odd then \[\frac{q^{n-1}}{2}\left(\frac{q^{n-1}-1}{q+1}\right) + q^{n-1}\] of these  polynomials are expressible as the product of three linear factors.

Let $A,B \in \sO_\fp$ such that $x^3 + Ax + B \equiv (x+a)^2(x-2a)~~(\bmod \, (\fp))$. The ratio of the measure in $(\sO_\fp)^2$ of the subset of such polynomials that factor into three linear factors in $\sO_\fp[x]$ to the measure of all such 
$(A,B)$ is exactly $\inv{2q + 2}.$
\end{lemma}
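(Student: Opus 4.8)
The plan is to split off the unramified simple root by Hensel's lemma, reducing the problem to a quadratic, and then to a count of squares in the finite ring $\sO_\fp/(\fp)^n$. First I would record that $(x+a)^2(x-2a) = x^3 - 3a^2x - 2a^3$, so the congruence $g \equiv (x+a)^2(x-2a) \pmod{\fp}$ is exactly $A \equiv -3a^2$ and $B \equiv -2a^3 \pmod{\fp}$. Each of $A$ and $B$ is thereby confined to one residue class mod $\fp$, i.e.\ to $q^{n-1}$ values mod $(\fp)^n$, which gives the count $q^{2n-2}$ and proves the first assertion. For the factorization count, note that $2a \not\equiv -a \pmod\fp$ since $p>3$, so $x-2a$ and $(x+a)^2$ are coprime in $\F_q[x]$; Hensel's lemma then lets every admissible $g$ factor uniquely mod $(\fp)^n$ as $g=(x-s)(x^2+bx+c)$ with $s\equiv 2a$ and $x^2+bx+c\equiv(x+a)^2\pmod\fp$. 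Matching the vanishing $x^2$-coefficient forces $b=s$, so admissible $g$ correspond bijectively (as residue classes mod $(\fp)^n$) to pairs $(s,c)$ with $s\equiv 2a$, $c\equiv a^2\pmod\fp$. Because $p\neq2$, the quadratic $x^2+sx+c$ is a product of two linear factors over $\sO_\fp/(\fp)^n$ if and only if its discriminant $D=s^2-4c$ is a square there (the roots being $(-s\pm\sqrt D)/2$, which automatically reduce to $-a$ because $D\equiv0\pmod\fp$); hence $g$ is a product of three linear factors if and only if $D$ is a square.

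Next I would turn this into a square count. For each fixed $s$ the map $c\mapsto D=s^2-4c$ is a bijection from the $q^{n-1}$ admissible $c$ onto $\fp\sO_\fp/(\fp)^n$, so the number of admissible $(s,c)$ with $D$ a square is $q^{n-1}S_n$, where $S_n$ denotes the number of squares of $\sO_\fp/(\fp)^n$ lying in $\fp$. I would compute $S_n$ by stratifying according to $v_\fp$: a square of valuation $2k<n$ is $\pi^{2k}u^2$ with $u$ a unit, and its distinct values biject with the unit squares mod $\pi^{n-2k}$. Here the key structural input is that for odd $p$ the squaring map on $(\sO_\fp/\pi^m)^\ast$ has kernel $\{\pm1\}$, so there are exactly $\frac{q^{m-1}(q-1)}{2}$ unit squares mod $\pi^m$. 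Summing the resulting geometric series over $k$ and adding $1$ for the zero square gives $S_n = 1 + \frac{q^{n-1}-q}{2(q+1)}$ for $n$ even and $S_n = 1 + \frac{q^{n-1}-1}{2(q+1)}$ for $n$ odd; multiplying by $q^{n-1}$ produces the two displayed counts. I expect this $S_n$ computation, together with the care needed to justify ``splits $\iff$ discriminant is a square'' over the non-reduced ring $\sO_\fp/(\fp)^n$, to be the main obstacle; everything upstream is bookkeeping, whereas here one must correctly handle zero divisors and the structure of the unit group.

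Finally, for the measure ratio I would pass to the limit. Writing $\Sigma_n$ for the set of admissible $(A,B)$ whose $g$ is a product of three linear factors mod $(\fp)^n$, the $\Sigma_n$ are nested decreasing (a factorization mod $(\fp)^{n+1}$ reduces to one mod $(\fp)^n$), and $\bigcap_n\Sigma_n$ is exactly the set where $g$ splits over $\sO_\fp$, since for odd $p$ a fixed $D\in\sO_\fp$ is a square in $\sO_\fp$ iff it is a square modulo every power of $\fp$. Each $\Sigma_n$ is a union of $N(n)$ residue disks of measure $q^{-2n}$, so by continuity of measure from above the splitting set has measure $\lim_n N(n)q^{-2n}$, while the admissible set has measure $q^{2n-2}q^{-2n}=q^{-2}$. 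Thus the desired ratio equals $\lim_n N(n)/q^{2n-2}$, and substituting the formulas from the previous step shows that both the even and odd expressions tend to $\frac{1}{2(q+1)}=\inv{2q+2}$, as claimed.
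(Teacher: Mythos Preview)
Your proof is correct and complete, but it follows a genuinely different route from the paper's. The paper parametrizes splitting polynomials by triples of roots $(\alpha,\beta,\gamma)$ modulo $(\fp)^n$ with $\alpha\equiv\beta\equiv -a$ and $\gamma=-\alpha-\beta$, and then must quotient by the non-uniqueness of factorization in $\sO_\fp/(\fp)^n$: two triples give the same polynomial iff $(\alpha-\alpha')(\alpha'-\beta)=0$, and the paper stratifies by the exact power of $\pi_\fp$ dividing $\alpha-\beta$ to count equivalence classes. Your approach instead splits off the simple root once via Hensel, reducing to a monic quadratic $x^2+sx+c$, and observes that over any ring in which $2$ is a unit such a quadratic factors iff its discriminant is a square; this converts the problem into the single structural computation of $S_n=\#\{\text{squares in }\fp\sO_\fp/(\fp)^n\}$, which you handle by the standard valuation stratification and the fact that squaring on $(\sO_\fp/\pi^m)^\ast$ has kernel $\{\pm1\}$ for odd $p$. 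The payoff of your route is that it isolates a single obstruction (squareness of $D$) and avoids the equivalence-class bookkeeping entirely; the paper's route is more elementary in that it never invokes the discriminant criterion over a non-reduced ring, at the cost of a longer combinatorial analysis. Both arguments conclude the measure statement by the same limit $\lim_n N(n)/q^{2n-2}$, and your justification that $\bigcap_n\Sigma_n$ coincides with the splitting locus (via ``$D$ is a square in $\sO_\fp$ iff it is a square modulo every $\pi^n$'') is a clean replacement for the paper's direct limit of ratios.
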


\begin{proof}
There are $q^{2n-2}$ factorizations $(x- \alpha)(x-\beta)(x-\gamma) \in \sO_\fp/(\fp)^n\sO_\fp[x]$ with 
$\alpha + \beta = -\gamma$ and $\alpha \equiv \beta\equiv a \mod (\fp)$. However $\sO_\fp/(\fp)^n\sO_\fp[x]$ is not a unique factorization domain. We call two different triples $(\alpha,\beta,\gamma), (\alpha',\beta',\gamma')\in \sO_\fp/(\fp)^n\sO_\fp$ equivalent if $(x- \alpha)(x-\beta)(x-\gamma) =(x- \alpha')(x-\beta')(x-\gamma')$. 
To count the total number of distinct polynomials $x^3 + ax + b\in \sO_\fp/(\fp)^n\sO_\fp[x]$ which are expressible as a product of three linear factors, we will count equivalence classes of triples $(\alpha,\beta,\gamma) \in \sO_\fp/(\fp)^n\sO_\fp$ with $(\alpha,\beta,\gamma) \equiv (a,a,-2a) (\bmod (\fp))$, and $\alpha + \beta + \gamma = 0$.

If two such triples $(\alpha,\beta,\gamma)$ and $(\alpha',\beta',\gamma')$ are equivalent, then $\gamma$ must equal 
$\gamma'$. This is because Hensel's lemma guarantees a unique lift of $(x-2a)$ to $\sO_\fp/(\fp)^n\sO_\fp[x]$. We are allowed to apply Hensel's lemma because the derivative of $x^3 + ax + b$ is $(x+a)(3x-3a)$. Plugging in $x = 2a$ we get $9a^2$ which is not zero because $p>3$.

Additionally, for the triples to be equivalent,  the coefficients of the two polynomials must be equal. In particular
 \[(x-\alpha)(x+\alpha + \gamma)(x - \gamma) = (x - \alpha')(x+\alpha' + \gamma)(x-\gamma).\] 
 So the constant terms must agree, which is equivalent to:
\begin{align*}
\alpha&\gamma(\alpha +\gamma) = \alpha'\gamma(\alpha' + \gamma)\\
\Longleftrightarrow&\alpha(\alpha +\gamma) = \alpha'(\alpha' + \gamma)\\
\Longleftrightarrow&(\alpha-\alpha')(\alpha'-\beta) = 0.
\end{align*}
Also the linear terms must be equal:
\[
\alpha^2 + \alpha\gamma + \gamma^2 = (\alpha')^2 + \alpha'\gamma + \gamma^2,
\] which is clearly equivalent to the same condition.

For any integer $r \in [1,n-1]$ and any of the $q^{n-1}$ choices of $\alpha$, there are
 $q^{n-r}-q^{n-r-1}$ different elements $\beta$ so that $\pi_\fp^r$ is the largest power of
 $\pi_\fp$ dividing $\alpha - \beta$. If $r < \frac{n}{2}$ then
 $(\alpha - \alpha')(\beta - \alpha') \equiv 0 \mod (\fp)^n$ if and only if
  $\alpha' \equiv \alpha \mod (\fp)^{n-r}$ or $\alpha' \equiv \beta \mod (\fp)^{n-r}$;
  there are $2q^{r}$ such $\alpha'$. If $r \ge \frac{n}{2}$, then
   $(\alpha - \alpha')(\beta - \alpha') \equiv 0 \mod (\fp)^n$
    if and only if $\alpha' \equiv \alpha \equiv \beta \mod (\fp)^{\ceil{\frac{n}{2}}}$,
    and there are $q^{\fl{\frac{n}{2}}}$ such $\alpha' \in \sO_\fp/(\fp)^n\sO_\fp$.

Thus, for any of the $q^{n-1}q^{\fl{\frac{n}{2}}}$ triples $(\alpha, \beta, \gamma)$
with $\pi_\fp^{\ceil{\frac{n}{2}}}|\alpha-\beta$, there are $q^{\fl{\frac{n}{2}}}$
equivalent triples $(\alpha',\beta', \gamma')$. So there are only $q^{n-1}$
 distinct polynomials with $\pi_\fp^{\ceil{\frac n2}}|\alpha - \beta$.

For any $1 \le r \le \fl{\frac{n}{2}}$ there are $q^{n-1}(q^{n-r}-q^{n-r-1})$
 triples $(\alpha, \beta, \gamma)$ with $\pi_\fp^r | \alpha - \beta$ and
 $\pi_\fp^{r+1} \nmid \alpha - \beta$. Every such triple is equivalent to $q^{r}$
  other triples $(\alpha', \beta', \gamma')$. So there are $q^{n-r-1}(q^{n-r}-q^{n-r-1})$ distinct polynomials with 
  $\pi_\fp^r|\alpha - \beta$ and $\pi_\fp^{r+1} \nmid \alpha - \beta$.

So the total number of equivalence classes of triples $(\alpha, \beta, \gamma)$
 is
 \begin{align*}
 q^{n-1} + \sum_{r = 1}^{\fl{\frac{n}{2}}}q^{n-r-1}\left(q^{n-r}-q^{n-r-1}\right) =&
 q^{n-1} + q^n\left(q^{n-1} + q^{n-2}\right)\frac{1-q^{2\fl{\frac{n}{2}}}}{1-q^{-2}}\\
 =& \begin{cases}
  \displaystyle \frac{q^{n-1}}{2}\left(\frac{q^{n-1}-1}{q+1}\right) + q^{n-1} \mbox{ if }n \mbox{ is odd}\\ \displaystyle\frac{q^{n-1}}{2}\left(\frac{q^{n-1}-q}{q+1}\right) + q^{n-1} \mbox{ if } n \mbox{ is even.}\end{cases}
 \end{align*}

The limit of the probabilities a polynomial $x^3 + Ax + B \in\sO_\fp/(\fp)^n\sO_\fp[x]$ factors into three linear polynomials, 
given that $x^3 + Ax + B \equiv x^3 + ax+b\pmod (\fp)$ is equal to the desired ratio in $(\sO_\fp)^2$. From the previous part of the lemma, when $n$ is odd this limit is 
\[\lim_{n\rightarrow \infty}q^{-2n+2}\left[\left(\frac{q^{n-1}}{2}\right)\left(\frac{q^{n-1} - 1}{q+1}\right) + q^{n-1}\right] = \frac{1}{2q+2}.\] When $n$ is even the limit is 
\[\lim_{n\rightarrow \infty}q^{-2n+2}\left[\frac{q^{n-1}}{2}\left(\frac{q^{n-1}-q}{q+1}\right) + q^{n-1}\right] = \frac{1}{2q+2}.
\]
\end{proof}


\begin{proof}[of Theorem~\ref{t:examp3}]
The proof of this theorem  proceeds much like Theorem \ref{t:examp2}.
We again use Lemma \ref{l:transf} to reduce to the case of $f(x) = x^3 + Ax + B \in \sO_\fp[x]$.
 If $A \in \pi_\fp^2\sO_{\fp}$ and $B \in \pi_\fp^3\sO_{\fp}$ then the polynomial
 ${\pi_\fp^{-3}}{f(\pi_\fp x)}$ is in $\sO_\fp[x]$ and has the same splitting type and  Galois group as $f(x)$.
 Thus if $m$ is the largest integer such that $\pi_\fp^{3m}|B$ and $\pi_\fp^{2m}|A$, which is a
 ${q^{-5m}}-{q^{-5m-5}}$ event, then we can instead study $\pi_\fp^{-3m}{f\left(\pi_\fp^mx\right)}$.
 Thus, like in Theorem \ref{t:examp2}, if $Y_G$ is the conditional probability that
 $f(x)$ has Galois group $G$ given that $\pi_\fp^2\nmid A$ or $\pi_\fp^3\nmid B$, then
 \[
 \moneThreek(\pgpThreek(G)) = Y_G\sum_{i=0}^\infty\left(\inv{q^{5i}}-\inv{q^{5i+5}}\right) = Y_G.
 \]
  The same equality holds if we instead studied $Y_\ssy$, the
  conditional probability that $f(x)$ had splitting type $\ssy$. Note that the measure of the conditional probability space is $1 - q^{-5}$.

We use the Newton polygon of the polynomial \cite[p. 144, Prop. 6.3]{Neukirch:1999} with uniformizing parameter $\pi_\fp$
 to determine the ramification of the extension generated by  $x^3 + Ax + B$. The polygon is the upper convex hull of the points $(0, \text{ord}_{\pi_\fp}(B))$, $(1, \text{ord}_{\pi_\fp}(A))$, and $(3, 0)$. The denominators of the slopes of this polygon correspond to the degree of ramification of the roots of the polynomial, and therefore to the degree of ramification of the splitting field over $K_\fp$.
 We will divide the set of possible pairs $(A,B)$ into cases, and compute the unconditional probability of each case occurring. At the end we will divide these probabilities
 by $1-q^{-5}$ to arrive at the conditional probabilities, $Y_G$
  for each subgroup $G \subset S_3$ and $Y_\ssy$ for each $\ssy \in T_3^*$.\smallskip

{\bf Case 1: $B \in \pi_\fp\sO_\fp$ and $A \notin \pi_\fp\sO_\fp$:} The probability of choosing a
polynomial in this case is $\inv{q}\left(1-\inv{q}\right)$.

In this case the reduction modulo $(\fp)$ is $x(x^2 + a)$ where $a \equiv A \mod (\fp)$.
Then this factors into three distinct linear polynomials with probability $\inv{2}$
and $x^2 + a$ is irreducible with probability $\inv{2}$ depending on
whether $a$ is a quadratic residue or not. Since the reduction is always square-free, the splitting field of the polynomial is unramified and
 the splitting type of the modulo $(\fp)$ reduction is equal to the splitting type of $f(x)$.
 In this case, the Galois group of $f(x)$ is trivial or $\Z/2\Z$ with equal probability.
 Therefore, this case contributes the following distribution:
 \begin{align*}
&\ssy(f) = (1,\ 1,\ 1)~~~\mbox{with probability}~~~\inv{2}\left( \frac{1}{q}\left(1- \frac{1}{q}\right)\right)\\
&\ssy(f) = (2,\ 1)~~~\mbox{with probability}~~~\inv{2}\left( \frac{1}{q}\left(1- \frac{1}{q}\right)\right)\\
&\Gal(K_f/K_\fp) = Id~~~\mbox{with probability}~~~\frac{1}{2} \left( \frac{1}{q}\left(1- \frac{1}{q}\right)\right)\\
&\Gal(K_f/K_\fp) = \Z/2\Z~~~\mbox{with probability}~~~\frac{1}{2} \left( \frac{1}{q}\left(1- \frac{1}{q}\right)\right).\end{align*}

{\bf Case 2: $B \in \pi_\fp^2\sO_\fp$ and $A\in \pi_\fp\sO_\fp-\pi_\fp^2\sO_\fp$:} The probability of choosing a polynomial in this case is $q^{-2}\left(q^{-1}-q^{-2}\right)$.

If $B \in \pi_\fp^2\sO_\fp$ and $A \in \pi_\fp\sO_\fp-\pi_\fp^2\sO_\fp$ then
 there would be ramification of degree two, as the slopes of the Newton polygon would be $-\text{ord}_{\pi_\fp}(B) + 1$ and $-\inv{2}$, where $\text{ord}_{\pi_\fp}(B)$ is the number of powers of $\pi_\fp$ which divide $B$. Thus the extension would always be a degree $2$ extension with Galois group $\Z/2\Z$. This is because two of the roots are in a quadratic ramified extension, and the third is in an unramified extension. Thus the polynomial must factor as an irreducible quadratic and a linear, and so the splitting type must be
  $(2,\ 1)$ and the Galois group must be $\Z/2\Z$.
Therefore, this case contributes the following distribution:
\begin{align*}&\ssy(f) = (2,\ 1)~~~\mbox{with probability}~~~\inv{2}q^{-2}\left(q^{-1}-q^{-2}\right)\\
&\Gal(K_f/K_\fp) = \Z/2\Z~~~\mbox{with probability}~~~\inv{2}q^{-2}\left(q^{-1}-q^{-2}\right).\end{align*}

{\bf Case 3: $B \in \pi_\fp^2\sO_\fp-\pi_\fp^3\sO_\fp$ and $A \in \pi_\fp^2\sO_\fp$.}
The probability of choosing a polynomial in this case is $\left(q^{-2}-q^{-3}\right)q^{-2}$.

In this case, the slope of the Newton polygon is $-\frac{2}{3}$. Thus there are three degrees of ramification, and $f(x)$ is has splitting type $(3)$. If the discriminant of $f(x)$ is a square then, then the Galois group is $\Z/3\Z$, otherwise it is $S_3$. The discriminant is $-4A^3 - 27B^2$, thus $\pi_\fp^4$ divides the discriminant. After factoring that out, we are left with
$\inv{\pi_\fp^4}(-4A^3 - 27B^2)$ which is not in $\pi_\fp\sO_\fp$. To determine if it is a square,
we look at its modulo $(\fp)$ reduction which is congruent to $-27\frac{B^2}{\pi_\fp^4}.$
 Thus it is a square if and only if $-27$ is a square modulo $(\fp)$. If $f_\fp$ is even, then every integer is a square modulo $(\fp)$. Otherwise, $-27$ is a square if and only if $-27$ is a square modulo $p$,
 which is if and only if $p \equiv 1 \pmod 3$. Combining these two facts, $-27$ is a square modulo $(\fp)$ if and only if $q \equiv 1 \pmod 3$.
 Therefore, this case contributes the following distribution:
\begin{align*}&\ssy(f) = (3)~~~\mbox{with probability}~~~\left(q^{-2}-q^{-3}\right)q^{-2},\\
&\Gal(K_f/K_\fp) = \Z/3\Z~~~\mbox{with probability}~~~\left(q^{-2}-q^{-3}\right)q^{-2},
~\mbox{if} ~q \equiv 1 ~(\bmod \, 3),\\
&\Gal(K_f/K_\fp) = S_3~~~\mbox{with probability}~~~\left(q^{-2}-q^{-3}\right)q^{-2},
~\mbox{if} ~q \equiv 2 ~(\bmod \, 3).\end{align*}

{\bf Case 4: $B \in \pi_\fp\sO_\fp-\pi_\fp^2\sO_\fp$ and $A \in \pi_\fp\sO_\fp$.} The probability of
choosing a polynomial in this case is $\left(q^{-1} - q^{-2}\right)q^{-1}$.

In this case, the slope of the Newton polygon is $-\inv{3}$. Like before the splitting type is $(3)$ and the extension 
has Galois group $\Z/3\Z$ or $S_3$
depending again on whether or not $q$ is a square modulo $3$.
Therefore, this case contributes the following distribution:
\begin{align*}&\ssy(f) = (3)~~~\mbox{with probability}~~~\left(q^{-1}-q^{-2}\right)q^{-1}\\
&\Gal(K_f/K_\fp) = \Z/3\Z~~~\mbox{with probability}~~~\left(q^{-1}-q^{-2}\right)q^{-1},
~\mbox{if} ~q \equiv 1 ~(\bmod \, 3),\\
&\Gal(K_f/K_\fp) = S_3~~~\mbox{with probability}~~~\left(q^{-1}-q^{-2}\right)q^{-1},
~\mbox{if} ~q \equiv 2 ~(\bmod \, 3).\end{align*}

{\bf Case 5: $B\notin \pi_\fp\sO_\fp$.} The probability of choosing a polynomial in this case is $\left(1- q^{-1}\right)$.

In this case there is no ramification, and so the splitting field is unramified over $K_\fp$ and the Galois group
 is completely determined by the splitting type of the polynomial.
 There are $q(q-1)$ different possible reductions modulo $(\fp)$,
 and we consider the probability of each of the four possible splitting types
 of the modulo $(\fp)$ reduction: $(3), (2,\ 1), (1,\ 1,\ 1), (1^2,\ 1).$ \smallskip

{\bf Case 5A: $B \notin \pi_\fp\sO_\fp$, and $\overline{f}(x)$ has splitting type $(3)$.}

We count the number of irreducible polynomials in $\F_q[x]$ of the form
$x^3 + Ax + B$. There are $\inv{3}\left(q^3 - q\right)$ monic, irreducible cubic
 polynomials in $\F_q[x]$. We call two monic polynomials $f,g\in\F_q[x]$ equivalent
  if there is a monic linear polynomial $h \in \F_q[x]$ with $f = g \circ h$. These
  equivalence classes are all of size $q$, since there are $q$ choices of $h$
  which all give distinct polynomials. All polynomials in the same equivalence class
   have the same splitting type. There is also a unique polynomial of the form
   $x^3 + Ax + B$ in each equivalence class of cubics. This means that there are
   $\inv{3}(q^2 - 1)$ irreducible cubic polynomials of the form $x^3 + Ax + B$ in $\F_q[x]$.
   So the probability of being in this subcase, given that the polynomial is in
   Case 5 is $\frac{q+1}{3q} = \inv{3} + \inv{3q}$.

   Therefore, this case contributes the following distribution:
\begin{align*}
&\ssy(f) = (3)~~~\mbox{with probability}~~~\left(1- q^{-1}\right)(\inv{3} + \inv{3q}) \\
&\Gal(K_f/K_\fp) = \Z/3\Z~~~\mbox{with probability}~~~\left(1- q^{-1}\right)(\inv{3} + \inv{3q})\\
&\Gal(K_f/K_\fp) = S_3~~~\mbox{with probability}~~~0.
\end{align*}

{\bf Case 5B: $B \notin \pi_\fp\sO_\fp$, and $\overline{f}(x)$ has splitting type $(2,\ 1)$.}

We count the number of polynomials $x^3 + Ax + B \in \F_q[x]$ which have an
 irreducible quadratic factor. There are $\inv{2}\left(q^2 - q\right)$ irreducible monic quadratic polynomials in $\F_q[x]$. For each $x^2 + ax + b \in \F_q[x]$ which is irreducible, the linear polynomial $(x-a)$ is unique such that their product is of the prescribed form.
  Thus, for each monic irreducible quadratic polynomial in $\F_q[x]$ there is a
   unique cubic of the form $x^3 + Ax + B$ which it divides. Thus there are
   $\inv{2}\left(q^2 - q\right)$ polynomials $x^3 + Ax + B \in \F_q[x]$ which have an irreducible quadratic factor. 
   However, whenever $a = 0$ this will give that $B = 0$,
   which is not allowed. There are $\inv{2}(q-1)$ irreducible quadratics of the form
    $x^2 + b$, and so we subtract these from the total count to see that there are
    $\inv{2}\left(q-1\right)^2$ polynomials of the prescribed form which have
    an irreducible quadratic factor. So the probability of being in this subcase,
     given that the polynomial is in Case 5 is $\frac{q-1}{2q} = \inv{2} - \inv{2q}$.

   Therefore, this case contributes the following distribution:
\begin{align*}
&\ssy(f) = (2,\ 1)~~~\mbox{with probability}~~~\left(1- q^{-1}\right)(\inv{2} - \inv{2q}) \\
&\Gal(K_f/K_\fp) = \Z/2\Z~~~\mbox{with probability}~~~\left(1- q^{-1}\right)(\inv{2} - \inv{2q}).
\end{align*}

{\bf Case 5C: $B \notin \pi_\fp\sO_\fp$, and $\overline{f}(x)$ has splitting type $(1,\ 1,\ 1)$.}

We wish to count the number of polynomials $x^3 + Ax + B \in \F_q[x]$ which factor as three distinct linear polynomials. 
These must factor as $(x + a)(x+ b )(x - a - b)$ with
 the following restrictions:
 \[a\neq b,~~~a\neq 0,~~~b\neq 0,~~~b\neq -a,~~~b \neq -2a,~~~ a\neq -2b.\]
 There are $q-1$ possible choices for $a$. Given any $a$, there are $q - 5$ choices
 for $b$ which satisfy the restrictions. Additionally, all six possible re-orderings
 of the triple $(a,b,-a-b)$ produce the same polynomial. So the total number of
 such polynomials is $\frac{(q-1)(q-5)}{6}.$ So the probability of being in this
 subcase, given that the polynomial is in Case 5 is $\frac{q-5}{6q} = \inv{6} - \frac{5}{6q}$.

  Therefore, this case contributes the following distribution:
\begin{align*}
&\ssy(f) = (1,\ 1,\ 1)~~~\mbox{with probability}~~~\left(1- q^{-1}\right)(\inv{6} - \frac{5}{6q}) \\
&\Gal(K_f/K_\fp) = \text{id}~~~\mbox{with probability}~~~\left(1- q^{-1}\right)(\inv{6} - \frac{5}{6q}).
\end{align*}

{\bf Case 5D: $B \notin \pi_\fp\sO_\fp$, and $\overline{f}(x)$ has splitting type $\left(1^2,\ 1\right)$.}

Any polynomial of the form $x^3 + Ax + B \in \F_q[x]$ which has a square factor
 must be of the form $(x+a)^2(x-2a)$, with $a\neq 0$. There are $q-1$ possible choices for $a$, and so the probability of being in this subcase, given that the polynomial is in
 Case 5 is $\frac{1}{q}$.

By Lemma \ref{l:limit} a polynomial in $\sO_\fp[x]$ whose reduction is in
Case 5D has splitting type $(1,\ 1,\ 1)$ with probability $\inv{2q+2}$, and
has splitting type $(2,\ 1)$ with probability $1 - \inv{2q+2}$.

  Therefore, this case contributes the following distribution:
\begin{align*}
&\ssy(f) = (1,\ 1,\ 1)~~~\mbox{with probability}~~~\inv{q}\left(1- q^{-1}\right)\left(\inv{2q+2}\right) \\
&\Gal(K_f/K_\fp) = \text{id}~~~\mbox{with probability}~~~\inv{q}\left(1- q^{-1}\right)\left(\inv{2q+2}\right)\\
&\ssy(f) = (2,\ 1)~~~\mbox{with probability}~~~\inv{q}\left(1- q^{-1}\right)\left(1-\inv{2q+2}\right) \\
&\Gal(K_f/K_\fp) = \Z/2\Z~~~\mbox{with probability}~~~\inv{q}\left(1- q^{-1}\right)\left(1-\inv{2q+2}\right).
\end{align*}

Summing over all cases and dividing by $1 - \inv{q^5}$
we get the stated equalities. For example
\begin{align*}\moneThreek(&\pgpThreek(1,\ 1,\ 1)) = \\ &= \left[\inv{1-q^{-5}}\right]\left[ \inv{2}\left( \frac{1}{q}\left(1- \frac{1}{q}\right)\right) + \left(1- q^{-1}\right)(\inv{6} - \frac{5}{6q}) + \inv{q}\left(1- q^{-1}\right)\left(\inv{2q+2}\right)\right],\end{align*} which are the totals from cases 1, 5C, and 5D, and
\[\moneThreek(\pgpThreek(3)) = \left[\inv{1-q^{-5}}\right]\left[\left(q^{-2}-q^{-3}\right)q^{-2} + \left(q^{-1}-q^{-2}\right)q^{-1} + \left(1- q^{-1}\right)(\inv{3} + \inv{3q})\right],\] which are the totals from cases 3, 4, and 5A.
\end{proof}

\begin{proof}[of Theorem  \ref{t:unramifiedextn3}]

The proof of this theorem is identical to that of Theorem \ref{t:examp3}, except we will only be summing over the cases where the Newton polygon of the polynomial has integral slopes, which is equivalent to the polygon having unramified splitting field. This excludes cases 2, 3, and 4 from the proof of Theorem \ref{t:examp3}. Polynomials in cases 1 and 5 generate unramified extensions. Therefore we have that \[\moneThreek(R_{3,\fp}) =  \left[\inv{1-q^{-5}}\right]\left[\inv{q}\left(1-\inv{q}\right) + 1 - q^{-1}\right],\] which comes from the probability of a polynomial being in cases 1 or 5.

Likewise, we calculate $\moneThreek(R_{3,\fp}(1,\ 1,\ 1))$, $\moneThreek(R_{3,\fp}(2,\ 1))$, and $\moneThreek(R_{3,\fp}(3))$ by summing over cases 1 and 5 and dividing by $1-q^{-5}$. This gives \begin{align*}
&\moneThreek(R_{3,\fp}(1,\ 1,\ 1)) = \left[\inv{1-q^{-5}}\right]\left[\inv{2}\left( \frac{1}{q}\left(1- \frac{1}{q}\right)\right) + \left(1- q^{-1}\right)(\inv{6} - \frac{5}{6q} + \inv{2q^2 + 2q})\right]\\
&\moneThreek(R_{3,\fp}(2,\ 1)) =  \left[\inv{1-q^{-5}}\right]\left[\inv{2}\left( \frac{1}{q}\left(1- \frac{1}{q}\right)\right) + \left(1- q^{-1}\right)(\inv{2} - \inv{2q} + \inv{q}-\inv{2q^2+2q})\right]\\
&\moneThreek(R_{3,\fp}(3)) = \left[\inv{1-q^{-5}}\right]\left[\left(1- q^{-1}\right)(\inv{3} + \inv{3q}) \right].
\end{align*}

\end{proof}

\paragraph{\bf Acknowledgments.}
The author thanks Jeffrey Lagarias for posing these problems, suggesting improvements
of some results, and  assistance editing the paper. 
He   thanks Mike Zieve for a proof of Lemma \ref{l:sqfree},
and  for many editorial comments on  the paper. He thanks Chris Hall for useful conversations on
$p$-adic Galois extensions, and Brian Conrad for asking whether results for $\Q_p$
extend to more general $p$-adic fields.

\end{document}